%%%%%%%%%%%%%%%%%%%%%%% file template.tex %%%%%%%%%%%%%%%%%%%%%%%%%
%
% This is a general template file for the LaTeX package SVJour3
% for Springer journals.          Springer Heidelberg 2010/09/16
%
% Copy it to a new file with a new name and use it as the basis
% for your article. Delete % signs as needed.
%
% This template includes a few options for different layouts and
% content for various journals. Please consult a previous issue of
% your journal as needed.
%
%%%%%%%%%%%%%%%%%%%%%%%%%%%%%%%%%%%%%%%%%%%%%%%%%%%%%%%%%%%%%%%%%%%
%
% First comes an example EPS file -- just ignore it and
% proceed on the \documentclass line
% your LaTeX will extract the file if required

% [arxiv_v2: filecontents example.eps stripped, 191 chars]
\RequirePackage{fix-cm}
\documentclass[smallextended]{svjour3}       % onecolumn (second format)
\smartqed  % flush right qed marks, e.g. at end of proof

\usepackage{graphicx}
\usepackage{amssymb}
\usepackage[english]{babel}
\usepackage{mathtools}
\usepackage{microtype} 
\usepackage{verbatim}
\usepackage{mathrsfs}

\usepackage{todonotes}
\usepackage{tikz,pgfplots}
\usetikzlibrary{patterns}

\usepackage{comment}
\usepackage{subcaption}
\captionsetup{compatibility=false}

%%%% Prevents compiling tikz pictures.
\usepackage{environ}
\makeatletter
\providecommand{\env@tikzpicture@save@env}{}
\providecommand{\env@tikzpicture@process}{}

%% COMMENT THE FOLLOWING COMMAND IN ORDER TO GENERATE TIKZ PICTURES
%\RenewEnviron{tikzpicture}[1][]{\par\stepcounter{tikzfigcntr}This is \texttt{tikzpicture}~\thetikzfigcntr\par}
%\makeatother
%%%%
\usepackage{color}

\newcommand\xqed[1]{%
  \leavevmode\unskip\penalty9999 \hbox{}\nobreak\hfill
  \quad\hbox{#1}}
\newcommand{\exampleEnd}{\xqed{$\triangle$}}

%%%%%% NUMBER SYSTEMS

\newcommand{\dR}{\mathbb{R}}

%%%%%% CALS, FRAKS, BOLDS
\newcommand{\calC}{\mathcal{C}}
\newcommand{\calF}{\mathcal{F}}

\newcommand{\calH}{\mathcal{H}}
\newcommand{\calI}{\mathcal{I}}
\newcommand{\calM}{\mathcal{M}}

\newcommand{\calV}{\mathcal{V}}
\newcommand{\calW}{\mathcal{W}}

\newcommand{\scrC}{\mathscr{C}}

%%%%%% VECTORS

\newcommand{\bx}{\mathbf{x}}
\newcommand{\by}{\mathbf{y}}

%%%%%% REDEFINITIONS

\renewcommand{\phi}{\varphi}
\renewcommand{\epsilon}{\varepsilon}
\providecommand{\abs}[1]{\lvert#1\rvert}
\providecommand{\norm}[1]{\lVert#1\rVert}

%%%%%% OPERATORS

\DeclareMathOperator{\Conv}{conv}

 %%%%% Lukasovy definice

 \newcommand{\R}{{\mathbb R}}

 \newcommand{\Ccal}{\mathcal{C}}
 
 \newcommand{\Fcal}{\mathcal{F}}

 \newcommand{\Wcal}{\mathcal{W}}
 \newcommand{\Xcal}{\mathcal{X}}

 \newcommand{\xb}{\bar{x}}
 \newcommand{\yb}{\bar{y}}

 \newcommand{\bi}{\begin{itemize}}
 	\newcommand{\ei}{\end{itemize}}
 \newcommand{\bpm}{\begin{pmatrix}}
 	\newcommand{\epm}{\end{pmatrix}}

 \newcommand{\nrm}[1]{|#1|}

 \newcommand{\ssoucin}[2]{\langle #1,#2\rangle}

 \newcommand{\conv}{\operatorname{conv}}

 %%%%%%%%%%%%%%%%%%%%%%%

\newcommand{\Ccalv}{{\mathcal{C}(v)}}
\newcommand{\Mcalv}{{\mathcal{M}(v)}}
\newcommand{\Wcalv}{{\mathcal{W}(v)}}

\newcommand{\bxb}{{\bar{\bx}}}
\newcommand{\Gl}{\hat{v}}

\newcommand{\sdf}{{\hat{\partial}}}
\newcommand{\sdl}{{\partial}}
\newcommand{\sdc}{{\overline{\partial}}}

\newcommand{\la}{\lambda}
\newcommand{\one}{\chi_N}
\newcommand{\zero}{0}

\hyphenation{sub-differential}

\begin{document}
\title{The Intermediate Set and Limiting Superdifferential for Coalitional Games: Between the Core and the Weber Set\thanks{We wish to express our gratitude to the reviewers and the editor for the challenging remarks concerning our paper. L. Adam gratefully acknowledges the support from the Grant Agency of the Czech Republic (15-00735S). The work of T. Kroupa was supported by Marie Curie Intra-European Fellowship OASIG (PIEF-GA-2013-622645).}
}

\titlerunning{The Intermediate Set and Limiting Superdifferential for Coalitional Games}        % if too long for running head

\author{Luk\'{a}\v{s} Adam  \and Tom\'{a}\v{s} Kroupa}

%\authorrunning{Short form of author list} % if too long for running head

\institute{L. Adam \at
	Institute of Information Theory and Automation of the Czech Academy of Sciences,\\ Pod Vod\'{a}renskou v\v{e}\v{z}\'{i} 4, 182 08 Prague, Czech Republic\\
	\email{adam@utia.cas.cz}           %  \\
	%             \emph{Present address:} of F. Author  %  if needed
	\and
	T. Kroupa \at
	Dipartimento di Matematica ``Federigo Enriques'', Universit\`a degli Studi di Milano,\\ Via Cesare Saldini 50, 20133 Milano, Italy\\
	\email{Tomas.Kroupa@unimi.it}
}

\date{Received: date / Accepted: date}
% The correct dates will be entered by the editor

\maketitle

\begin{abstract}
We introduce the intermediate set as an interpolating solution concept between the core and the Weber set of a coalitional game. The new solution is defined as the limiting superdifferential of the Lov\' asz extension and thus it
completes the hierarchy of variational objects used to represent the core (Fr\'echet superdifferential) and the Weber set (Clarke superdifferential). It is shown that the intermediate set is a~non-convex solution containing the Pareto optimal payoff vectors that depend on some chain of coalitions and marginal coalitional contributions with respect to the chain. A detailed comparison between the intermediate set and other set-valued solutions is provided. We compute the exact form of intermediate set for all games and provide its simplified characterization for the simple games and the glove game.
\keywords{coalitional game \and limiting superdifferential \and intermediate set \and core \and Weber set}
\subclass{91A12 \and 49J52}
\end{abstract}

\section{Introduction}
Many important solution concepts for transferable-utility $n$-person coalitional games can be equivalently expressed as formulas involving gradients or generalized gradients of a~suitable  extension of the given game. This applies to some of the well-known single-valued solutions, such as the Shapley value and the Banzhaf--Coleman index of power. These constructions usually rely on the multilinear extension of coalitional games as functions from the discrete cube $\{0,1\}^n$ onto $[0,1]^n$; see \cite[Chapter XII]{Owen95}, for example. The purpose of such a ``differential representation'' of the solution is not only computational, but it is also to provide a~new interpretation of the corresponding payoff vectors, which usually revolves around the idea of marginal contributions to a~given coalition. 

The recent progress in variational analysis \cite{mordukhovich.2006,rockafellar.wets.1998} enables us to construct various kinds of generalized derivatives, the so-called subgradients and supergradients, for a very large family of lower semicontinuous functions. The concept of a gradient of a~differentiable function is replaced by that of a subdifferential (superdifferential) of a possibly nonsmooth function. The elements of a superdifferential---the supergradients---have a close geometric connection with Jacobians of all smooth majorants of the function at the neighborhood of a given point; see Appendix \ref{App:Super}. The Fr\'echet superdifferential, the limiting (Mordukhovich) superdifferential and the Clarke superdifferential count among the main superdifferentials used in nonsmooth analysis.

The representation of some solution concepts by generalized derivatives for selected classes of cooperative games was studied already by Aubin \cite{Aubin74a}. The authors of \cite{danilov.koshevoy.2000,Sagara14} use the Lov\'asz extension of a coalitional game in order to express the core and the Weber set in terms of its Fr\'echet and the Clarke superdifferential, respectively. 

In this paper we pursue a converse research direction by adopting the idea proposed in \cite{Sagara14}: We employ the limiting superdifferential to define a new solution concept for coalitional games, the so-called intermediate set. Specifically, the intermediate set is the limiting superdifferential of the Lov\'asz extension for the grand coalition. The associated payoff vectors can be thus interpreted as marginal contributions to the grand coalition. However, several questions arise at this point, for instance:
\begin{itemize}
	\item  What is an interpretation and properties of the intermediate set?
	\item  Are the payoff vectors in the intermediate set determined by some reasonable principles of profit allocation?
\end{itemize}

The main goal of this paper is to argue that the newly constructed solution is sensible and interesting from many perspectives. Using the tools of variational analysis, we will show that the intermediate set
\begin{itemize}
	\item is a nonempty, subadditive and Pareto optimal solution,
	\item is a finite union of (possibly empty) convex polytopes, each of which can be interpreted as a~core of some ``marginal'' game,
	\item lies in-between the core and the Weber set,
	\item coincides with the core if and only if the game is supermodular.
\end{itemize}

The intermediate set can be viewed as a nonempty interpolant between the core and the Weber set, which is convenient especially whenever the former is very small and the latter is huge. Our Theorem \ref{Theorem Bv} provides a clear interpretation of the payoff vectors from the intermediate set: for some coalitional chain, each such vector is a Weber-style marginal vector on the level of blocks of coalitions and, at the same time, no coalition inside each block can improve upon this payoff vector in the sense of marginal coalitional contributions. Hence, the intermediate set is a solution concept that looks globally like the Weber set, but behaves locally like the core concept. In Theorem \ref{Thm IS Cores} we will show that the intermediate set is a non-convex polyhedron whose convex components coincide with cores of some games, which are determined by a given coalitional chain and marginal coalitional contributions. As for examples, the intermediate set on the class of simple games and glove games is computed in Theorem \ref{Theorem simple Bv} and Theorem \ref{Theorem gloves}, respectively.

The article is structured as follows. We fix our notation and terminology in Section~\ref{sec:coreweb}, where we repeat basic facts about solution concepts, the Lov\' asz extension and its superdifferentials. Section \ref{sec:IS} contains a characterization of the intermediate set based on coalitional chains in the player set (Theorem \ref{Theorem Bv}) and discussion of a~distribution process that leads to a payoff vector in the intermediate set. Some motivating examples are also included (Examples \ref{Example gloves simple} and~\ref{Example star}). We carry out an in-depth inspection of the properties of the intermediate set and compare it to the various solution concepts in Section \ref{Section properties}. Differences among the core, the intermediate set and the Weber set are summarized in Table~\ref{Table properties}. Two selected classes of coalitional games (the simple games and the glove game) are analyzed in Section \ref{sec:ex}  in order to refine a formula from Theorem~\ref{Theorem Bv}. The main part of the paper is concluded with an outlook towards further research in Section \ref{sec:concl}. Appendix consists of two parts. A brief explanation of the notions from nonsmooth analysis is in Appendix~\ref{App:Super}, with no attempt at a~comprehensive discussion of all the results needed in the paper. Appendix \ref{Appendix proof} contains the proof of our main result, Theorem~\ref{Theorem Bv}.

\section{Core and Weber Set}\label{sec:coreweb}

We use the standard notions and results from cooperative game theory; see \cite{PelegSudholter07}.
Let $N=\{1,\dots,n\}$ be a finite set of \emph{players}, where $n$ is a positive integer. A~\emph{coalition} is a subset $A\subseteq N$ and by $2^N$ we denote the powerset of $N$.
A \emph{coalitional game (with transferable utility)} is a function $v\colon 2^N\to \dR$ with $v(\emptyset)=0$. Any  $\bx=(x_1,\dots,x_n)\in \dR^n$ is called a \emph{payoff vector}. We introduce the following notation:
\[
\bx(A)=\sum_{i\in A} x_i, \quad \text{for every $A\subseteq N$.}
\]
We say that a payoff vector $\bx$ is \emph{feasible} in a game $v$ whenever $\bx(N)\leq v(N)$. The set of all feasible payoff vectors in $v$ is denoted by $\calF(v)$.

Let $\Gamma(N)$ be the set of all games and $\Omega\subseteq \Gamma(N)$.  A \emph{solution} on $\Omega$ is a set-valued mapping $\sigma\colon \Omega\to 2^{\dR^n}$ that sends every game $v\in\Omega$ to a set $\sigma(v)\subseteq\Fcal(v)$. We recall the core solution and the Weber set. The \emph{core} of a~game $v$ is the convex polytope 
\[
\calC(v)=\{\bx \in \dR^n \mid \bx(N)=v(N),\; \bx(A)\geq v(A) \text{ for every $A\subseteq N$}\}.
\]
The core is always contained in the set $\calI(v)$ of all \emph{imputations} in the game $v$,
\begin{equation}
\calI(v)=\{\bx\in \R^n\mid \bx(N)=v(N),\, x_i\geq v(\{i\}),\, i\in  N\}. \label{imputations}
\end{equation}

Let $\Pi_n$ be the set of all the permutations $\pi$ of the player set $N$. Let $v\in \Gamma(N)$ and $\pi \in \Pi_n$. A~\emph{marginal vector} of a game $v$ with respect to  $\pi$ is the payoff  vector $\bx^v(\pi)\in\dR^n$ with coordinates
\begin{equation}\label{def:MargVec}
x^v_i(\pi)=v\left(\bigcup_{j\leq \pi^{-1}(i)}\{\pi(j)\}\right)-v\left(\bigcup_{j< \pi^{-1}(i)}\{\pi(j)\}\right), \quad i\in N.
\end{equation}
The \emph{Weber set} of $v$ is the convex hull of all the marginal vectors of $v$,
\[
\calW(v)=\Conv \{\bx^v(\pi)\mid \pi \in \Pi_n\}.
\]
Since $\bx^v(\pi)(N)=v(N)$, the Weber set is a solution on $\Gamma(N)$. Moreover, the inclusion $\calC(v)\subseteq \calW(v)$ holds true for every $v\in \Gamma(N)$; see \cite[Theorem 14]{Weber88}.

The fundamental tool in this paper is the concept of Lov\'{a}sz extension~\cite{Lovasz83}. For every set $A\subseteq N$ let $\chi_A$ denote the incidence vector in $\dR^n$ whose coordinates are given by
\begin{equation}\label{defincidence}
(\chi_A)_i=
\begin{cases}
1 & \text{if $i\in A$,}\\ 0 & \text{otherwise.}
\end{cases}
\end{equation}
We write $0$ in place of $\chi_{\emptyset}$. The embedding of $2^N$ into $\dR^n$ by means of the mapping $A\mapsto \chi_A$ makes it possible to interpret a game on  $2^N$ as a real function on $\{0,1\}^n$. 
Indeed, it is enough to define $\hat{v}(\chi_A)=v(A)$ for every $A\subseteq N$. The function $\hat{v}$ can be extended onto the whole of $\dR^n$ as follows. For every $\bx\in\dR^n$, put
\begin{equation*}\label{Label defin Pix}
\Pi(\bx)=\{\pi\in \Pi_n\mid x_{\pi(1)}\geq \dots \geq x_{\pi(n)}\}. 
\end{equation*}
Given $i\in N$ and $\pi\in \Pi(\bx)$, define
\begin{equation*}%\label{Label defin Vipi}
V_i^{\pi}(\bx)=\{j\in N\mid x_j\geq x_{\pi(i)}\}.
\end{equation*}
Note that $V_i^{\pi}(\bx)=V_i^{\rho}(\bx)$ for every $\pi,\rho\in\Pi(\bx)$. This implies that any vector $\bx\in\dR^n$ can be unambiguously written as a linear combination
\begin{equation}\label{eq:decomposition}
\bx=\sum_{i=1}^{n-1}(x_{\pi(i)}-x_{\pi(i+1)})\cdot \chi_{V_i^{\pi}(\bx)}+x_{\pi(n)}\cdot \chi_N
\end{equation}
for an arbitrary $\pi\in \Pi(\bx)$. It is convenient to define $V_0^{\pi}(\bx):=\emptyset$. Then we can rewrite (\ref{eq:decomposition}) as
\begin{equation}\label{eq:decomposition2}
\bx=\sum_{i=1}^{n}x_{\pi(i)}\cdot \left (\chi_{V_i^{\pi}(\bx)}-\chi_{V_{i-1}^{\pi}(\bx)}\right). 
\end{equation}
The \emph{Lov\'asz extension} $\hat{v}$ of $v\in\Gamma(N)$ is the function $\dR^n\to\dR$ defined linearly with respect to the decomposition (\ref{eq:decomposition2}):%\todo{pridat ze to nezavisi na volbe $\pi\in\Pi(\bx)$}
\begin{equation}\label{Label defin Lovasz}
\hat{v}(\bx)=\sum_{i=1}^{n}x_{\pi(i)}\cdot \left (v(V_i^{\pi}(\bx))-v(V_{i-1}^{\pi}(\bx))\right), \quad \text{for any $\bx\in\dR^n$.}
\end{equation}
Observe that the definition of $\hat{v}(\bx)$ is independent on the choice of $\pi\in\Pi(\bx)$.
Clearly $\hat{v}(\chi_A)=v(A)$ for every coalition $A\subseteq N$.
% Let $\calO$ be a chain\footnote{By a \emph{chain} in $2^N$ we mean a totally ordered subset of $2^N$.} in $2^N$ containing $N$. We may list the elements of $\calO$ as $A_1,\dots,A_k$ with $A_1=N$, where $k=\abs{\calO}$. With each such chain $\calO$ we associate the cone\footnote{A \emph{cone} is a nonempty subset of a real linear space closed under nonnegative scalar multiplication.} defined as follows:
%\[
%\Cone(\calO)=\left\{\sum_{i=1}^k \alpha_i\cdot\chi_{A_i} \mid \alpha_1\in\dR,\; \alpha_2\dots,\alpha_k\geq 0\right\}.
%\]
It is easy to see that the Lov\'asz extension $\hat{v}$ of any game $v$ fulfills these properties:
\begin{itemize}
\item $\hat{v}$ is continuous and piecewise affine on $\dR^n$;
\item $\hat{v}$ is positively homogeneous: $\hat{v}(\lambda\cdot \bx)=\lambda\cdot \hat{v}(\bx)$  for every $\lambda\geq 0$ and $\bx\in\dR^n$;
\item the mapping $v\in \Gamma(N) \mapsto \hat{v}$ is linear.
\end{itemize}
\noindent
The following easy lemma says that the local behavior of $\hat{v}$ is the same around $\chi_N$ as in the neighborhood of $0$.

\begin{lemma}\label{Lemma subdifferential zero one}
	For any $\bx\in\R^n$ it holds true that
	$$
	\Gl(\bx+\one) = \Gl(\bx)+\Gl(\one).
	$$
\end{lemma}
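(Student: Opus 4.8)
The plan is to exploit the fact that $\one=\chi_N$ is the constant vector all of whose coordinates equal $1$. Adding it to $\bx$ raises every coordinate by the same amount and therefore leaves the relative ordering of the coordinates untouched. First I would fix a permutation $\pi\in\Pi(\bx)$, so that $x_{\pi(1)}\geq\dots\geq x_{\pi(n)}$. Since $(x_j+1)-(x_k+1)=x_j-x_k$ for all $j,k\in N$, the same $\pi$ also orders the coordinates of $\bx+\one$ nonincreasingly, so that $\pi\in\Pi(\bx+\one)$ and formula (\ref{Label defin Lovasz}) may be applied to $\bx+\one$ using this $\pi$.

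The key observation is that the level sets entering the Lov\'asz formula coincide for the two vectors. Indeed, for each $i\in N$,
\[
V_i^{\pi}(\bx+\one)=\{j\in N\mid x_j+1\geq x_{\pi(i)}+1\}=\{j\in N\mid x_j\geq x_{\pi(i)}\}=V_i^{\pi}(\bx).
\]
Writing $\Delta_i:=v(V_i^{\pi}(\bx))-v(V_{i-1}^{\pi}(\bx))$, I would substitute into (\ref{Label defin Lovasz}) and split the resulting sum:
\[
\Gl(\bx+\one)=\sum_{i=1}^{n}(x_{\pi(i)}+1)\cdot\Delta_i=\sum_{i=1}^{n}x_{\pi(i)}\cdot\Delta_i+\sum_{i=1}^{n}\Delta_i.
\]
The first sum on the right is precisely $\Gl(\bx)$ by the very definition of the Lov\'asz extension.

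Finally, the second sum telescopes, because $V_0^{\pi}(\bx)=\emptyset$ and $V_n^{\pi}(\bx)=N$:
\[
\sum_{i=1}^{n}\Delta_i=v(V_n^{\pi}(\bx))-v(V_0^{\pi}(\bx))=v(N)-v(\emptyset)=v(N)=\Gl(\one),
\]
where the last equality uses $\Gl(\chi_A)=v(A)$ with $A=N$. Combining the two displays gives $\Gl(\bx+\one)=\Gl(\bx)+\Gl(\one)$, as claimed. There is no real obstacle here: the only point requiring verification is the invariance of the level sets $V_i^{\pi}$ under a uniform shift of all coordinates, which is immediate, and everything else is just the telescoping already built into the decomposition (\ref{eq:decomposition2}).
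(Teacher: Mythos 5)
Your proof is correct and follows essentially the same route as the paper: the paper's one-line argument rests on exactly the identities you verify, namely that a uniform shift by $\one$ preserves $\Pi(\bx)$ and the level sets $V_i^{\pi}$, after which the extra term telescopes to $v(N)=\Gl(\one)$. You have simply written out the details the paper leaves implicit.
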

\begin{proof}
This follows directly from the definition (\ref{Label defin Lovasz}) together with the identities $\Pi(\bx+\one)=\Pi(\bx)$, $\Pi(\chi_N)=\Pi_n$, and $V_1^{\pi}(\chi_N)=\ldots=V_n^{\pi}(\chi_N)=N$ for every $\pi\in \Pi_n$.
\qed\end{proof}

A game $v\in\Gamma(N)$ is called \emph{supermodular} (or \emph{convex}) if the following inequality is satisfied:
\[
v(A\cup B)+v(A\cap B)\geq v(A)+v(B),\quad \text{for every $A,B\subseteq N$.}
\]
Every supermodular game $v$ is also \emph{superadditive}:
\[
v(A\cup B)\geq v(A)+v(B),\quad \text{for every $A,B\subseteq N$ with $A\cap B=\emptyset$.}
\]
A game $v$ is \emph{submodular}  if the game $-v$ is supermodular. A game $v$ is called \emph{additive} when $v(A\cup B)=v(A)+v(B)$ for every $A,B\subseteq N$ with $A\cap B=\emptyset$.  We will make an ample use of several characterizations of supermodular games appearing in the literature. 

\begin{proposition}\label{Proposition supermodularity equivalence}
Let $v\in\Gamma(N)$. Then the following are equivalent:
\begin{enumerate}
\item $v$ is supermodular;
\item $\{\bx^v(\pi)\mid \pi \in \Pi_n\}\subseteq \calC(v)$;
\item $\calC(v)=\calW(v)$;
\item The Lov\' {a}sz extension $\hat{v}$ of $v$ is a concave function.
\end{enumerate}
\end{proposition}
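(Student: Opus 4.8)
The plan is to establish the cycle $(1)\Rightarrow(2)\Rightarrow(3)\Rightarrow(1)$, which settles the equivalence of the first three conditions, and then to attach the fourth through $(1)\Rightarrow(4)\Rightarrow(1)$. The two workhorses I would rely on are the already-cited inclusion $\calC(v)\subseteq\calW(v)$ and the identity $\hat{v}(\bx)=\langle\bx,\bx^v(\pi)\rangle$, valid for every $\pi\in\Pi(\bx)$, which is immediate from comparing (\ref{def:MargVec}) with (\ref{Label defin Lovasz}).

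For $(1)\Rightarrow(2)$ I would fix a permutation $\pi$, write $S_i=\{\pi(1),\dots,\pi(i)\}$, and note that efficiency $\bx^v(\pi)(N)=v(N)$ is just the telescoping of (\ref{def:MargVec}). For the core inequalities, given a coalition $A$ I would enumerate its elements along $\pi$ and bound each marginal contribution $v(S_{i_k})-v(S_{i_k-1})$ from below by $v(A\cap S_{i_k})-v(A\cap S_{i_k-1})$ using the increasing-differences form of supermodularity; summing telescopes to $v(A)$. Then $(2)\Rightarrow(3)$ is immediate: the core is convex and contains every marginal vector, hence contains $\calW(v)=\Conv\{\bx^v(\pi)\mid\pi\in\Pi_n\}$, and with $\calC(v)\subseteq\calW(v)$ this forces equality. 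For $(3)\Rightarrow(1)$, each marginal vector lies in $\calW(v)=\calC(v)$ and so satisfies the core constraints; for $i\in N$ and $S\subseteq T\subseteq N\setminus\{i\}$ I would choose $\pi$ listing $S$, then $T\setminus S$, then $i$, then the rest, and read off the core inequality at the coalition $S\cup\{i\}$, which rearranges to $v(T\cup\{i\})-v(T)\ge v(S\cup\{i\})-v(S)$, the standard increasing-differences reformulation of supermodularity.

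To bring in $(4)$, the implication $(4)\Rightarrow(1)$ is the easy half: a direct computation gives $\hat{v}(\chi_A+\chi_B)=v(A\cup B)+v(A\cap B)$, so combining midpoint concavity of $\hat{v}$ at $\chi_A,\chi_B$ with positive homogeneity yields $v(A\cup B)+v(A\cap B)\ge v(A)+v(B)$. For $(1)\Rightarrow(4)$ I would first show, by Abel summation of (\ref{Label defin Lovasz}) together with the core conditions $\by(S_i)\ge v(S_i)$ and $\by(N)=v(N)$, that $\hat{v}(\bx)\le\langle\bx,\by\rangle$ for every $\by\in\calC(v)$ and every $\bx$. Under supermodularity, the marginal vector for any $\pi\in\Pi(\bx)$ lies in $\calC(v)$ by $(1)\Rightarrow(2)$ and attains equality via the identity above, so $\hat{v}(\bx)=\min_{\by\in\calC(v)}\langle\bx,\by\rangle$; being a pointwise minimum of linear functions, $\hat{v}$ is concave.

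I expect the main obstacle to be $(1)\Rightarrow(4)$: the remaining implications reduce to bookkeeping with telescoping marginal contributions, whereas concavity genuinely requires representing $\hat{v}$ as the lower envelope $\min_{\by\in\calC(v)}\langle\bx,\by\rangle$ and hence the Abel-summation estimate. A secondary technical point is that the identity $\hat{v}(\bx)=\langle\bx,\bx^v(\pi)\rangle$ and the Abel summation are cleanest when $\bx$ has distinct coordinates, so that $V_i^\pi=S_i$; the tie case I would dispatch either by checking directly that both sides are unaffected or by invoking continuity of $\hat{v}$ and density of vectors with distinct coordinates.
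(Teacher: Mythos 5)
Your proof is correct. Note that the paper itself does not argue any of these implications: it proves the proposition purely by citation, attributing $1\Rightarrow 2$ to Shapley, $2\Rightarrow 3$ to Weber, $3\Rightarrow 1$ to Ichiishi, and $1\Leftrightarrow 4$ to Lov\'asz. Your implication cycle is exactly the one the paper assembles from the literature, and the arguments you supply are the standard ones behind those citations: the telescoping/increasing-differences estimate for $1\Rightarrow 2$; convexity of $\calC(v)$ together with the known inclusion $\calC(v)\subseteq\calW(v)$ for $2\Rightarrow 3$; reading off the core inequality at $S\cup\{i\}$ for a permutation listing $S$, then $T\setminus S$, then $i$ for $3\Rightarrow 1$; the identity $\hat{v}(\chi_A+\chi_B)=v(A\cup B)+v(A\cap B)$ combined with positive homogeneity for $4\Rightarrow 1$; and the lower-envelope representation $\hat{v}(\bx)=\min_{\by\in\calC(v)}\langle\bx,\by\rangle$ for $1\Rightarrow 4$. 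The one point that genuinely needs the care you already give it is the tie case in $\hat{v}(\bx)=\langle\bx,\bx^v(\pi)\rangle$: when $\bx$ has repeated coordinates the level sets $V_i^\pi(\bx)$ stall while the sets $\{\pi(1),\dots,\pi(i)\}$ grow one element at a time, but the two sums agree after telescoping within each tied block, and your fallback via continuity of $\hat{v}$ and density of vectors with distinct coordinates also works. In short, the proposal is a complete, self-contained substitute for the paper's proof-by-reference, with no gaps.
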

\begin{proof}
Shapley \cite{Shapley72} proved 1. $\Rightarrow$ 2. and Weber \cite{Weber88} showed that 2. $\Rightarrow$ 3., respectively.
The implication 3. $\Rightarrow$ 1. was shown by Ichiishi \cite{Ichiishi81}.
The equivalence between 1. and 4. is the ``supermodular" version of the theorem originally proved by Lov\'asz in \cite{Lovasz83} for submodular games.
\qed\end{proof}
\begin{remark}
An extensive survey of the conditions equivalent to supermodularity together with (references to) the proofs can be found in \cite[Appendix A]{StudenyKroupa:CoreExtreme}. The notion of ``convexity" is somewhat overloaded in the game-theoretic literature since it appears in a number of different contexts and meanings. On top of that convex games have concave Lov\'asz extensions. For those reasons we strictly prefer the term ``supermodular game" over ``convex game", although the latter is commonly used.
\exampleEnd\end{remark}

The Lov\'asz extension $\Gl$ of a coalitional game $v$ is used to characterize the core and the Weber set by the tools of nonsmooth calculus. It was shown in \cite[Proposition~3]{danilov.koshevoy.2000} that the core coincides with the Fr\'echet superdifferential of $\Gl$ at~$\zero$, $\Ccalv=\sdf \Gl(\zero)$. Similarly, from \cite[Proposition~4.1]{Sagara14} we know that the Weber set is the Clarke superdifferential of $\Gl$ at~$\zero$,  $\Wcalv = \sdc\Gl(\zero)$. From the viewpoint of game theory, however, it is more sensible to evaluate the superdifferentials of~$\Gl$ at $\one$ since it conforms with the idea of marginal contributions to the grand coalition $N$. This is possible by Lemma \ref{Lemma subdifferential zero one}  so that we can shift the computations of the respective superdifferentials to~$\chi_N$.

\begin{proposition}\label{Proposition Cv Wv superdifferential}
For every game $v\in\Gamma(N)$,
$$
\aligned
\Ccalv &= \sdf \Gl(\one) = \sdf \Gl(\zero),\\
\Wcalv &= \sdc \Gl(\one) = \sdc \Gl(\zero).
\endaligned
$$
\end{proposition}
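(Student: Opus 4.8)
The plan is to take the two equalities evaluated at $\zero$ as given and reduce everything at $\one$ to them. Indeed, $\Ccalv = \sdf\Gl(\zero)$ is \cite[Proposition~3]{danilov.koshevoy.2000} and $\Wcalv = \sdc\Gl(\zero)$ is \cite[Proposition~4.1]{Sagara14}, so it remains only to prove the two translation identities $\sdf\Gl(\one)=\sdf\Gl(\zero)$ and $\sdc\Gl(\one)=\sdc\Gl(\zero)$. The engine for both is Lemma~\ref{Lemma subdifferential zero one}.

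Rewriting that lemma with $\bx$ replaced by $\bx-\one$ and using $\Gl(\one)=v(N)$, I get $\Gl(\bx)=\Gl(\bx-\one)+v(N)$ for every $\bx\in\R^n$. In words, on the whole of $\R^n$ the function $\Gl$ coincides with its own translate by $\one$ up to the additive constant $v(N)$; in particular the germ of $\Gl$ at $\one$ is, after the shift $\bx\mapsto\bx-\one$, exactly the germ of $\Gl$ at $\zero$ plus a constant. Since both the Fr\'echet and the Clarke superdifferentials (see Appendix~\ref{App:Super}) are purely local objects that are invariant under translating the base point and under adding a constant to the function, this already forces $\sdf\Gl(\one)=\sdf\Gl(\zero)$ and $\sdc\Gl(\one)=\sdc\Gl(\zero)$.

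To make this precise I would check the two invariances directly from the definitions. For the Fr\'echet superdifferential, substituting $\bx=\by+\one$ in the defining difference quotient turns the numerator $\Gl(\bx)-\Gl(\one)-\ssoucin{\bx^*}{\bx-\one}$ into $\Gl(\by)-\ssoucin{\bx^*}{\by}$ (here I use $\Gl(\zero)=0$) and leaves the denominator $\norm{\bx-\one}=\norm{\by}$ unchanged, so a vector $\bx^*$ satisfies the limsup condition at $\one$ if and only if it satisfies it at $\zero$. For the Clarke superdifferential I would verify that the generalized directional derivative is unchanged: writing $\by=\zz+\one$ and applying the lemma to both $\zz+t\mathbf{d}$ and $\zz$ gives $\Gl(\by+t\mathbf{d})-\Gl(\by)=\Gl(\zz+t\mathbf{d})-\Gl(\zz)$, whence the generalized directional derivatives at $\one$ and at $\zero$ agree for every direction $\mathbf{d}$, and the two Clarke superdifferentials, which are built from these directional derivatives, coincide.

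The argument carries no analytic difficulty: Lemma~\ref{Lemma subdifferential zero one} holds for all $\bx\in\R^n$, so it applies equally to the shifted arguments $\by$, $\zz$ and $\zz+t\mathbf{d}$ appearing in the limits. The only thing to be careful about is bookkeeping---matching each superdifferential to its defining limit in Appendix~\ref{App:Super} and confirming that the substitution $\bx\mapsto\bx-\one$ respects both the limit direction $\bx\to\one$ (i.e.\ $\by\to\zero$) and the normalization---so I expect this step, rather than any estimate, to be the one requiring the most care.
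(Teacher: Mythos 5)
Your proposal follows the paper's own route exactly: the identities at $\zero$ are quoted from \cite[Proposition~3]{danilov.koshevoy.2000} and \cite[Proposition~4.1]{Sagara14}, and the shift to $\one$ is obtained from Lemma~\ref{Lemma subdifferential zero one} via the translation identity $\Gl(\bx)=\Gl(\bx-\one)+v(N)$ together with the locality and translation/constant invariance of the Fr\'echet and Clarke superdifferentials, which is precisely what the paper (implicitly) does. The only cosmetic difference is that you verify the invariance through the limsup and generalized directional-derivative characterizations rather than the simplified piecewise-affine definitions of Appendix~\ref{App:Super}, but this changes nothing of substance.
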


\section{Intermediate Set}\label{sec:IS}

This section is composed of three subsections. In the first one we define the intermediate set using the limiting superdifferential. The characterization based on coalitional chains is proved in Subsection~\ref{subsec:OP}. In the last subsection we show that the intermediate set can be expressed as a union of cores of certain marginal games.

\subsection{Definition and basic properties}

As we already noted in the introduction, it can frequently happen that the core is small or empty and, at the same time, the Weber set is too coarse. For this reason we follow the idea of Boris Mordukhovich, which was mentioned in \cite{Sagara14}, and we define a new solution concept directly as $\sdl \Gl(\one)$ by analogy with Proposition \ref{Proposition Cv Wv superdifferential}, where $\sdl$ is the limiting superdifferential. The limiting superdifferential always lies in-between the Clarke superdifferential and the Fr\'echet superdifferential (see Appendix~\ref{App:Super}). A straightforward interpretation of the limiting superdifferential is that it coincides with the union of all Fr\'echet superdifferentials with respect to some sufficiently small neighborhood of the point in question. 

\begin{definition}\label{def:IS}
	Let $v\in\Gamma(N)$. The \emph{intermediate set} $\Mcalv$ of $v$ is the set
$$
\Mcalv := \sdl\Gl(\one).
$$
\end{definition}
In Subsection~\ref{subsec:OP} we will derive a combinatorial formula for $\Mcalv$, which bypasses the computation of Lov\' asz extension and the limiting superdifferential. The following example shows the shape of $\Mcalv$ for a particular non-supermodular $3$-player game $v$.

\begin{example}\label{Example gloves simple}
Consider a game with the player set $N=\{1,2,3\}$ in which the first player owns a single left glove, while the remaining two players possess one right glove each. The profit of a coalition $A\subseteq N$ is the number of glove pairs the coalition owns:
$$
v(A) = \begin{cases} 1 &\text{if }A\in\{\{1,2\}, \{1,3\}, N\}, \\ 0 &\text{otherwise.} \end{cases}
$$
It is not difficult to compute $\Ccalv, \Mcalv$ and $\Wcalv$ directly by the definition. However, since $v$ is both a~simple game and a~glove game, we can also employ Theorem \ref{Theorem simple Bv} and Theorem \ref{Theorem gloves} to recover~$\Mcalv$. Thus, 
$$
\aligned
\Ccalv &= \{(1,0,0)\},\\
\Mcalv &= \conv\{(1,0,0),(0,1,0)\} \cup \conv \{(1,0,0),(0,0,1)\},\\
\Wcalv &= \conv\{(1,0,0),(0,1,0),(0,0,1)\}.\\
\endaligned
$$
\begin{figure}[!ht]
\hspace{.03\textwidth}
\begin{subfigure}[b]{.45\textwidth}
\centering
\begin{tikzpicture}[scale=1.5]
\draw [thick] (0,0) -- (2,0) node[below] {$x_2$};
\draw [thick] (0,0) -- (0,2)  node[right] {$x_1$};
\draw [thick] (0,0) -- (-1,-1) node[left] {$x_3$};
\draw [very thick] (1,0) -- (0,1) -- (-0.5,-0.5);
\filldraw (1,0) circle (0.6mm);
\filldraw (0,1) circle (0.6mm);
\filldraw (-0.5,-0.5) circle (0.6mm);
\end{tikzpicture}
\caption*{Intermediate set}
\label{Figure glove 1}
\end{subfigure}
\hspace{.03\textwidth}
\begin{subfigure}[b]{.45\textwidth}
\centering
\begin{tikzpicture}[scale=1.5]
\draw [thick] (0,0) -- (2,0) node[below] {$x_2$};
\draw [thick] (0,0) -- (0,2)  node[right] {$x_1$};
\draw [thick] (0,0) -- (-1,-1) node[left] {$x_3$};
\draw [pattern = north west lines] (1,0) -- (0,1) -- (-0.5,-0.5) -- (1,0);
\filldraw (1,0) circle (0.6mm);
\filldraw (0,1) circle (0.6mm);
\filldraw (-0.5,-0.5) circle (0.6mm);
\end{tikzpicture}
\caption*{Weber set}
\label{Figure glove 2}
\end{subfigure}
\hspace{.03\textwidth}
\caption{The intermediate set and the Weber set for the $3$-person glove game}
\label{Figure glove}
\end{figure}
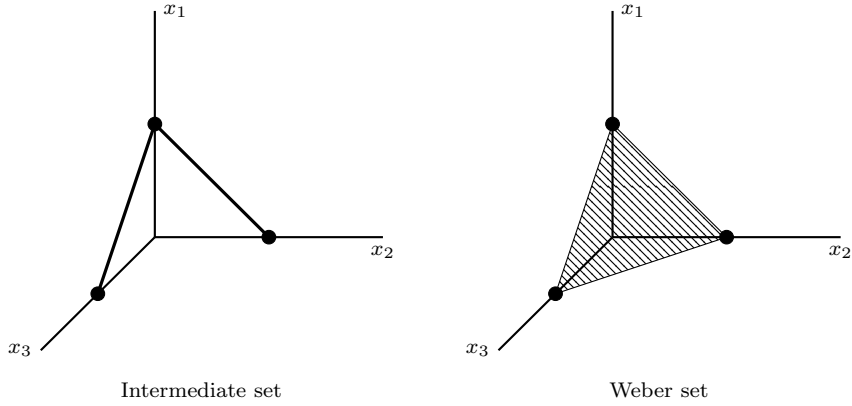
We will briefly comment on the shape of the solutions. The only payoff vector in the core $\Ccalv$ assigns the total worth to player 1. This allocation reflects the principle of stability: the surplus of right-hand gloves on the market makes both player 2 and 3 accept arbitrarily small payoff. On the other hand, the Weber set $\Wcalv$ coincides with the set of all imputations $\calI(v)$, which may be difficult to interpret. The intermediate set $\Mcalv$ allows for two scenaria, each of which involves two players only: player $1$ strikes a deal either with player $2$ or with player $3$. Once such a two-player coalition $\{1,i\}$ arises, where $i\in \{2,3\}$, the coalition has the effective power to distribute its profit to $1$ and $i$ in an arbitrary ratio. The remaining player (a~non-contractor) is therefore eliminated from any allocation process.
\exampleEnd\end{example}

In the rest of this section, we will show  basic properties of $\Mcalv$.

\begin{lemma}\label{Lemma Icalv basic properties}
Let $v\in\Gamma(N)$. Then:
\begin{enumerate}
	\item $\Mcalv\neq \emptyset$.
	\item  We have  
	\begin{equation}\label{Label sets inclusions}
	\Ccalv\subseteq\Mcalv\subseteq \Wcalv,
	\end{equation}
	where both inclusions may be strict.
	\item $\Wcalv = \conv\Mcalv$.
	\item $v$ is supermodular if and only if $\Ccalv=\Mcalv$. 
\end{enumerate}
\end{lemma}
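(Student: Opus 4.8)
The plan is to deduce all four assertions from the abstract relationship among the three superdifferentials, combined with Proposition \ref{Proposition Cv Wv superdifferential}, Proposition \ref{Proposition supermodularity equivalence}, and the general facts collected in Appendix \ref{App:Super}. I would begin with the inclusions in part~2, as they are the most immediate. The Lov\'asz extension $\Gl$ is continuous and piecewise affine, hence locally Lipschitz, so the generic chain $\sdf\Gl(\one)\subseteq\sdl\Gl(\one)\subseteq\sdc\Gl(\one)$ applies. By Proposition \ref{Proposition Cv Wv superdifferential} the two outer terms equal $\Ccalv$ and $\Wcalv$, while the middle term is $\Mcalv$ by Definition \ref{def:IS}, yielding $\Ccalv\subseteq\Mcalv\subseteq\Wcalv$. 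Strictness of both inclusions is witnessed by the three-player glove game of Example \ref{Example gloves simple}, in which the core is a single point, the Weber set is the full imputation triangle, and $\Mcalv$ is a strictly intermediate, non-convex set.

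For part~1 I would invoke the fact that the limiting superdifferential of a locally Lipschitz function on $\R^n$ is always nonempty (Appendix \ref{App:Super}). A self-contained argument also follows from the piecewise-affine structure: on the interior of each maximal region of linearity of $\Gl$ the Fr\'echet superdifferential is the singleton consisting of that region's gradient, and since $\one$ lies in the closure of every such region, approaching $\one$ from inside one of them exhibits at least one gradient as a limiting supergradient. Part~3 rests on the convexification identity for locally Lipschitz functions, namely that the Clarke superdifferential is the convex hull of the limiting one, $\sdc\Gl(\one)=\conv\bigl(\sdl\Gl(\one)\bigr)$. Because $\Gl$ is piecewise affine, the limiting superdifferential is a finite set of gradients, so its convex hull is automatically closed and no closure operation is needed; translating through Proposition \ref{Proposition Cv Wv superdifferential} gives $\Wcalv=\conv\Mcalv$.

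Part~4 then follows by combining the previous parts with Proposition \ref{Proposition supermodularity equivalence}. If $v$ is supermodular, that proposition gives $\Ccalv=\Wcalv$, and the sandwich of part~2 forces $\Ccalv=\Mcalv$. Conversely, if $\Ccalv=\Mcalv$, then $\Mcalv$ is convex, being the polytope $\Ccalv$, so $\Mcalv=\conv\Mcalv=\Wcalv$ by part~3; hence $\Ccalv=\Wcalv$, which by Proposition \ref{Proposition supermodularity equivalence} is equivalent to supermodularity of $v$.

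The main obstacle is purely a matter of importing the correct statements from variational analysis---nonemptiness of the limiting superdifferential for locally Lipschitz functions and the Clarke-equals-convex-hull-of-limiting identity---and of matching their sub- versus superdifferential conventions to those of the paper. Once these are in place, everything reduces to set inclusions and the already-established equivalences, with the convexity of the core serving as the decisive observation in the converse direction of part~4.
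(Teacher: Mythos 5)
Your proposal is correct and follows essentially the same route as the paper: the sandwich of superdifferentials for part 2, nonemptiness of the limiting superdifferential of a Lipschitz function for part 1, the identity $\sdc f=\conv\sdl f$ for part 3, and the Shapley--Ichiishi equivalence plus convexity of the core for part 4. One minor inaccuracy: for a piecewise affine function the limiting superdifferential is a finite union of convex polytopes rather than a finite set of gradients (cf.\ Example \ref{Example gloves simple}, where $\Mcalv$ is a union of two segments), but since such a union is compact its convex hull is still closed, so your conclusion stands.
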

\begin{proof}
By \cite[Corollary 8.10, Theorem 9.13]{rockafellar.wets.1998} we have $\Mcalv\neq\emptyset$. The inclusions~\eqref{Label sets inclusions} follow from the relation  $\sdf f(x)\subseteq \sdl f(x) \subseteq \sdc f(x)$% and the equalities from Proposition \ref{Proposition Cv Wv superdifferential}.
; see Appendix \ref{App:Super}. Item 3. is a consequence of \cite[Theorem~8.49]{rockafellar.wets.1998}.  Supermodularity of $v$ is equivalent to $\Ccalv=\Wcalv$ by Shapley--Ichiishi theorem \cite{Shapley72,Ichiishi81}. Thus, item~4. follows from 3. together with convexity of $\Ccalv$.
\qed\end{proof}
\noindent
Analogously to Proposition \ref{Proposition Cv Wv superdifferential} we can evaluate the limiting superdifferential at~$0$ and still obtain the same result, $\Mcalv$.
\begin{lemma}\label{Lemma sets zero one}
The following identity is satisfied for every game $v\in \Gamma(N)$:
$$
\Mcalv = \sdl \Gl(\one) = \sdl \Gl(\zero).
$$
\end{lemma}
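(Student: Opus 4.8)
The plan is to reduce everything to the shift identity in Lemma~\ref{Lemma subdifferential zero one} together with the elementary invariance properties of the limiting superdifferential. Since $\Mcalv = \sdl\Gl(\one)$ holds by Definition~\ref{def:IS}, the actual content of the statement is the single equality $\sdl\Gl(\one) = \sdl\Gl(\zero)$, which is the limiting-superdifferential analogue of the assertions already made for the Fr\'echet and Clarke superdifferentials in Proposition~\ref{Proposition Cv Wv superdifferential}.

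First I would record the consequence of Lemma~\ref{Lemma subdifferential zero one} that $\Gl(\bx+\one) = \Gl(\bx) + \Gl(\one)$ for every $\bx\in\R^n$, where $\Gl(\one) = v(N)$ is a \emph{constant}. Thus, as functions on $\R^n$, the translate $\Gl(\,\cdot\,+\one)$ differs from $\Gl$ only by the additive constant $v(N)$. This is the single structural fact that drives the whole argument.

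Next I would pass to the Fr\'echet superdifferential. Both addition of a constant and translation of the argument act predictably on $\sdf$: adding a constant changes $\sdf$ at no point, while the $\sdf$ of a translated function at a point coincides with the $\sdf$ of the original function at the translated point. Combining these two standard facts with the identity above yields $\sdf\Gl(\bx+\one) = \sdf\Gl(\bx)$ for every $\bx\in\R^n$. In words, the set-valued map $\bx\mapsto\sdf\Gl(\bx)$ is carried onto itself by the translation $\bx\mapsto\bx+\one$, so the Fr\'echet superdifferentials occurring throughout a neighborhood of $\one$ are exactly those occurring throughout the corresponding neighborhood of $\zero$.

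Finally I would invoke the definition of the limiting superdifferential as the set of all limits of supergradients $\bx^*_k\in\sdf\Gl(\bx_k)$ taken along sequences $\bx_k\to\one$ (the value-convergence requirement $\Gl(\bx_k)\to\Gl(\one)$ being automatic, since $\Gl$ is continuous). Writing $\bx_k = \by_k+\one$ with $\by_k\to\zero$ sets up a bijection between sequences converging to $\one$ and sequences converging to $\zero$, and the previous step gives $\sdf\Gl(\bx_k) = \sdf\Gl(\by_k)$ termwise. Hence the two outer limits defining $\sdl\Gl(\one)$ and $\sdl\Gl(\zero)$ collect precisely the same supergradients, and the claimed equality follows. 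I do not expect a genuine obstacle here; the only point demanding care is to confirm that the limiting superdifferential is assembled from Fr\'echet (regular) supergradients and that, thanks to continuity of $\Gl$, the value-convergence condition imposes no extra constraint, so that the translation $\bx\mapsto\bx+\one$ transports the construction performed at $\zero$ onto the one at $\one$ verbatim.
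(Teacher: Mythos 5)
Your proposal is correct and follows essentially the same route as the paper: the paper's proof is a one-line appeal to Lemma~\ref{Lemma subdifferential zero one} (the Lov\'asz extension has the same local structure near $\one$ as near $\zero$), and you simply spell out the intermediate steps — the translate differs by the constant $v(N)$, hence $\sdf\Gl(\cdot+\one)=\sdf\Gl(\cdot)$, hence the limiting superdifferentials, being assembled from nearby Fr\'echet supergradients, coincide. No gaps; your version is just a more explicit rendering of the paper's argument.
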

\begin{proof}
	By Lemma \ref{Lemma subdifferential zero one} the Lov\'asz extension $\Gl$ has the same structure in the neighborhood of $\one$ and in the neighborhood of $\zero$. \qed
\end{proof}
\noindent
Putting together Proposition \ref{Proposition Cv Wv superdifferential} and Lemma \ref{Lemma sets zero one}, we can now summarize the relations between the discussed solutions and the superdifferentials as follows:
\begin{equation*}
\aligned
\Ccalv &= \sdf \Gl(\one) = \sdf \Gl(\zero),\\
\Mcalv &= \sdl \Gl(\one) = \sdl \Gl(\zero),\\
\Wcalv &= \sdc \Gl(\one) = \sdc \Gl(\zero).
\endaligned
\end{equation*}

\subsection{Characterization by chains}\label{subsec:OP}
In this section we are going to prove the main characterization of the intermediate set,  Theorem \ref{Theorem Bv}. Its purpose is twofold. First, this result shows that the purely analytic definition of intermediate set can be equivalently stated in terms of the combinatorial and order-theoretic properties of a~coalitional game. Second, it may be better to use Theorem \ref{Theorem Bv} than the definition based on the limiting superdifferential for the computational reasons. In what follows the main tool is the notion of a coalitional chain, which can be thought of as a~generalization of a permutation (or a total order) on the player set $N$.

A \emph{(coalitional) chain} is a subset $\calH=\{C_1,\dots,C_k\}$ of $2^N$, such that $k \geq 1$, $C_1\neq\emptyset$, $C_i\subsetneq C_{i+1}$ for $i=1,\dots,k-1$, and $C_k=N$. We will assume $C_0:=\emptyset$ throughout the paper. Let $\scrC$ be the set of all coalitional chains in $2^N$. The family $\scrC$ is associated with the following scheme of allocating payoffs $\bx\in\dR^n$ among the players in a game $v$:
\begin{enumerate}
	\item The players are organized into a chain $\calH=\{C_1,\dots,C_k\}$, depending on their position in the allocation process.
	\item Each coalition $C_i\setminus C_{i-1}$ can distribute the total amount
	\begin{equation}\label{Label Bv 1}
	\bx(C_i\setminus C_{i-1})=v(C_i) - v(C_{i-1})
	\end{equation}
	to its members, for all $i=1,\dots,k$. This can be interpreted as the marginal contribution of $C_i\setminus C_{i-1}$ to the coalition $C_{i-1}$ with respect to the chain $\calH$.
	\item No coalition $B\subseteq C_i\setminus C_{i-1}$ can improve upon $\bx$ while respecting the  order of coalitions given by $\calH$, that is,
	\begin{equation}
	\label{Label Bv 2}
	\bx(B) \ge v(C_{i-1}\cup B) - v(C_{i-1})
	\end{equation}
for all $B\subseteq C_i\setminus C_{i-1}$ and all $i=1,\dots,k$.
\end{enumerate}

Note that the players share the total of $v(N)$ among them as a consequence of item 2. Our main result says that $\bx\in \Mcalv$ if and only if there exists a chain~$\calH$ such that the total profit $v(N)$ is distributed among the players according to items 1.--3. above. For any chain ${\calH}=(C_1,\dots,C_k)$, put
\[
\calM_{\calH}(v) = \{\bx\in\R^n \mid \text{$\bx$ satisfies \eqref{Label Bv 1} and \eqref{Label Bv 2}}\}.
\]

\begin{theorem}\label{Theorem Bv}
Let $v\in\Gamma(N)$. Then
\begin{equation}\label{eqThm1}
\Mcalv=\bigcup_{\calH\in\scrC} \calM_{\calH}(v).
\end{equation}
\end{theorem}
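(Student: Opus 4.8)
The plan is to compute the Fréchet superdifferential $\sdf\Gl(\bx)$ at an \emph{arbitrary} point $\bx\in\R^n$, and then to recover $\Mcalv=\sdl\Gl(\zero)$ (Lemma~\ref{Lemma sets zero one}) from the outer-limit description $\sdl\Gl(\zero)=\Limsup_{\bx\to\zero}\sdf\Gl(\bx)$ supplied by \cite{rockafellar.wets.1998} and recalled in Appendix~\ref{App:Super}. The first and technically decisive step is a local decomposition of $\Gl$. Given $\bx$, group its coordinates by their common values: if the distinct values are $a_1>\dots>a_k$ and $B_i=\{j\in N\mid x_j=a_i\}$, then $C_i:=B_1\cup\dots\cup B_i$ defines a chain $\calH=\{C_1,\dots,C_k\}\in\scrC$ with $B_i=C_i\setminus C_{i-1}$, which I denote $\calH(\bx)$. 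I claim that on a neighbourhood of $\bx$ the Lovász extension splits as $\Gl(\by)=\sum_{i=1}^{k}\hat w_i(\by_{B_i})$, where $w_i(B):=v(C_{i-1}\cup B)-v(C_{i-1})$ is the marginal game carried by the block $B_i$, $\hat w_i$ is its Lovász extension, and $\by_{B_i}$ is the restriction of $\by$ to the coordinates in $B_i$. This follows by sorting the coordinates of a nearby $\by$: the strict gaps $a_i>a_{i+1}$ force every admissible permutation to exhaust $B_1$, then $B_2$, and so on, so that for indices inside $B_i$ the incremental sets $V_\ell^\pi\setminus V_{\ell-1}^\pi$ always lie between $C_{i-1}$ and $C_i$. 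Substituting into \eqref{Label defin Lovasz} and telescoping turns the block-$B_i$ summand into exactly $\hat w_i(\by_{B_i})$; because the gaps are strict, this identity holds on a full neighbourhood of $\bx$, not merely to first order.

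From the decomposition I read off the superdifferential. Since the summands depend on disjoint groups of variables, the directional derivative separates, $\Gl'(\bx;\by)=\sum_i \hat w_i'(\bx_{B_i};\by_{B_i})$, and the supergradient inequality $\langle\xi,\by\rangle\ge\Gl'(\bx;\by)$ decouples across blocks; hence $\sdf\Gl(\bx)=\prod_{i=1}^{k}\sdf\hat w_i(\bx_{B_i})$, identified inside $\R^n$. Each $\bx_{B_i}=a_i\chi_{B_i}$ is a constant vector, at which all orderings are admissible, so the directional derivative of $\hat w_i$ there equals $\hat w_i$ itself and $\sdf\hat w_i(a_i\chi_{B_i})=\{\xi\mid\langle\xi,d\rangle\ge\hat w_i(d)\text{ for all }d\}$, independently of the scalar $a_i$; by the Danilov--Koshevoy description \cite{danilov.koshevoy.2000} this set is the core $\calC(w_i)$ of the marginal game. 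Comparing the defining relations of $\calC(w_i)$ with the inequalities \eqref{Label Bv 1}--\eqref{Label Bv 2} shows $\prod_i\calC(w_i)=\calM_{\calH}(v)$, whence $\sdf\Gl(\bx)=\calM_{\calH(\bx)}(v)$.

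It remains to pass to the limiting superdifferential. For $\bigcup_{\calH\in\scrC}\calM_{\calH}(v)\subseteq\Mcalv$, observe that every chain $\calH$ equals $\calH(\bx_0)$ for a suitable $\bx_0$ (assign strictly decreasing constant values to the successive blocks) and that $\calH(\eps\bx_0)=\calH$ for all $\eps>0$; thus $\sdf\Gl(\eps\bx_0)=\calM_{\calH}(v)$ stays constant as $\eps\to0$, giving $\calM_{\calH}(v)\subseteq\Limsup_{\bx\to\zero}\sdf\Gl(\bx)=\Mcalv$. For the reverse inclusion, any $\xi\in\Mcalv$ is a limit of points $\xi_m\in\sdf\Gl(\bx_m)=\calM_{\calH(\bx_m)}(v)$ with $\bx_m\to\zero$; since $\scrC$ is finite I may pass to a subsequence along which the chain $\calH(\bx_m)$ is some fixed $\calH$, and since $\calM_{\calH}(v)$ is a closed polytope the limit $\xi$ lies in $\calM_{\calH}(v)$. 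Together these inclusions establish \eqref{eqThm1}.

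I expect the main obstacle to be the local decomposition of the first paragraph: verifying cleanly that the block-wise telescoping of \eqref{Label defin Lovasz} reproduces the Lovász extensions $\hat w_i$ of the marginal games on a genuine neighbourhood of $\bx$ (so that a first-order object such as $\sdf\Gl(\bx)$ may legitimately be computed from it), while keeping the bookkeeping among the chain $\calH$, its blocks $B_i$, and the marginal games $w_i$ under control.
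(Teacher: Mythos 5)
Your proof is correct and follows essentially the same route as the paper's Appendix~\ref{Appendix proof}: the same block decomposition of $\Gl$ near a point according to the level sets of its coordinates, the same identification of each block's Fr\'echet superdifferential with the core of the corresponding marginal game (the paper's Lemma~\ref{Lemma Bv}), and the same passage to the limiting superdifferential. If anything, you make explicit two steps the paper leaves implicit in its closing sentence, namely the product structure of $\sdf\Gl(\bx)$ across blocks and the $\Limsup$ argument using the finiteness of $\scrC$ and the closedness of each $\calM_{\calH}(v)$.
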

\begin{proof}
See Appendix \ref{Appendix proof}. \qed
\end{proof}

\noindent
Theorem \ref{Theorem Bv} can serve as an alternative definition of $\Mcalv$. Since the union in~(\ref{eqThm1}) runs over $\scrC$, computing $\Mcalv$ can be a fairly complex task---it is known that the cardinality of $\scrC$ equals the $n$-th ordered Bell number. For example, $|\scrC|=75$ for $n=4$.

\begin{remark}
	In order to simplify the notation for coalitions we will occasionally omit the braces and commas, so that 
	a coalition $\{i,j\}$ is written as $ij$.
	\exampleEnd\end{remark}

The distribution procedure satisfying the conditions 1.--3. above has two extreme cases. Assume that the chain $\calH$ is maximal, that is, for some permutation $\pi\in\Pi_n$ we have $\calH=\{\pi(1),\pi(1)\pi(2),\dots,\pi(1)\dots\pi(n)\}$ . In this case the profit allocation in any game $v$ leads to a single marginal vector $\bx^v(\pi)$ defined by (\ref{def:MargVec}), $\calM_{\calH}(v)=\{\bx^v(\pi)\}$. On the contrary, if the chain is $\{N\}$, then all the players (and coalitions) are treated equally, which results in distributing payoffs according to the definition of core, $\calM_{\{N\}}(v)=\calC(v)$. Any chain $\calH=\{C_1,\dots,C_k\}$ different from those two borderline cases generates allocations $\bx\in \calM_{\calH}(v)$ combining the rule of marginal contributions for coalitions $C_i\setminus C_{i-1}$ (item 2.) with the core-like stability for sub-coalitions $B$ of $C_i\setminus C_{i-1}$ (item 3.).

We will now present two examples. First, we will make use of Theorem~\ref{Theorem Bv} to write the formula for the intermediate set of any $3$-player coalitional game. Second, using the first example we will present a game for which the three considered solution concepts differ substantially. 

\begin{example}\label{Example game n=3}
	Let $N=\{1,2,3\}$. The family $\scrC$ of all chains has $13$ elements in this case:
	\begin{equation*}
	\begin{split}
	\scrC =& \bigl\{\{N\},\{1,N\},\{2,N\},\{3,N\},\{12,N\},\{13,N\},\{23,N\}\bigr\}\;  \cup \\  & \bigl\{\{\pi(1),\pi(1)\pi(2),N\}\mid \pi \in \Pi_n\bigr\}.
		\end{split}
	\end{equation*}
	Let $v\in\Gamma(N)$. For example, the choice $\calH=\{1,N\}$ gives
	\begin{equation*}
	\begin{split}
	\calM_{\calH}(v)= \bigl\{\bx\in\R^3 \mid  & x_1=v(1),\, \bx(23)= v(N	)-v(1),\\ & x_2\geq v(12)-v(1),\, x_3\geq v(13)-v(1)\bigr\}.
		\end{split}
		\end{equation*}
	Theorem \ref{Theorem Bv} says that 
	\begin{align*}
	\Mcalv=\phantom{ }&\Ccalv\ \cup\\ &  \calM_{\{1,N\}}(v) \cup  \calM_{\{2,N\}}(v)\cup \calM_{\{3,N\}}(v)\ \cup   \\ &  \calM_{\{12,N\}}(v) \cup \calM_{\{13,N\}}(v)\cup \calM_{\{23,N\}}(v)\ \cup   \\ & \{\bx^v(\pi)\mid \pi \in \Pi_n\}.
	\end{align*}
\exampleEnd\end{example}

\begin{example}\label{Example star}
	Let $N=\{1,2,3\}$ and
	$$
	v(A)=\begin{cases} 0 &\text{if $\nrm{A}=1$}, \\ 2 &\text{if $\nrm{A}=2$}, \\ 3 &\text{if $A=N$}.\end{cases}
	$$
	It is easy to see that $v$ is superadditive but not supermodular.
	\begin{figure}[!ht]
	\begin{subfigure}[b]{.3\textwidth}
	\centering
	\begin{tikzpicture}[scale=0.84]
	\draw [gray] (-2,0) -- (2,0) -- (0,3.46) -- cycle;
	\filldraw (0,1.15) circle (0.6mm);
	\end{tikzpicture}
	\caption{Core}
	\label{Figure star 1}
	\end{subfigure}
	\hspace{.03\textwidth}
	\begin{subfigure}[b]{.3\textwidth}
	\centering
	\begin{tikzpicture}[scale=0.84]
	\draw [gray] (-2,0) -- (2,0) -- (0,3.46) -- cycle;
	\draw [ultra thick] (1.33,1.15) -- (-1.33,1.15);
	\draw [ultra thick] (-0.67,0) -- (0.67,2.31);
	\draw [ultra thick] (0.67,0) -- (-0.67,2.31);
	\end{tikzpicture}
	\caption{Intermediate set}
	\label{Figure star 2}
	\end{subfigure}
	\hspace{.03\textwidth}
	\begin{subfigure}[b]{.3\textwidth}
	\centering
	\begin{tikzpicture}[scale=0.84]
	\draw [gray] (-2,0) -- (2,0) -- (0,3.46) -- cycle;
	\filldraw (-0.67,0) -- (0.67,0) -- (1.33,1.15) -- (0.67,2.31) -- (-0.67,2.31) -- (-1.33,1.15) -- cycle;
	\end{tikzpicture}
	\caption{Weber set}
	\label{Figure star 3}
	\end{subfigure}
	\caption{The solutions from Example \ref{Example star} in the barycentric coordinates}
	\label{Figure star}
	\end{figure}
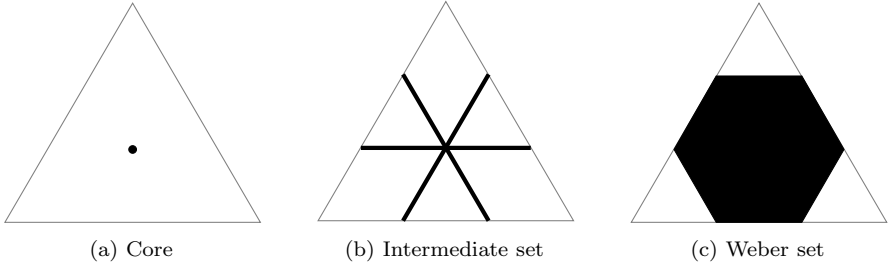
	\noindent
The core of this game is single-valued, $\Ccalv = \{(1,1,1)\}$, whereas the Weber set~$\calW(v)$ is the hexagon whose $6$ vertices are all the coordinate-wise permutations of the payoff vector $(0,1,2)$. The intermediate set is the union of three line segments---see Figure \ref{Figure star}(b). We obtain that $\calM_{\{i,N\}}(v)=\emptyset$ for every chain $\{i,N\}$. On the other hand, any component $\calM_{\{ij,ijk\}}(v)$ is the line segment whose endpoints are the two marginal vectors $\bx$ with $x_k=1$. Thus a payoff vector $\bx\in\dR^3$ is in $\Mcalv$ iff it belongs to $\calM_{\{ij,ijk\}}(v)$ for some chain $\{ij,ijk\}$. Note that this example also shows that, in general, the intermediate set is not a~union of some faces of the Weber set.
\exampleEnd\end{example}

Lemma \ref{Lemma Icalv basic properties} states that $v$ is supermodular if and only if the core coincides with the intermediate set. We can calculate the form of the intermediate set also when the game satisfies the converse condition, that is, when $v$ is submodular.

\begin{lemma}\label{Lemma supersub}
 If $v$ is submodular, then $\Mcalv=\{\bx^v(\pi)\mid \pi \in \Pi_n\}$.
\end{lemma}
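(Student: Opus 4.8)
The plan is to derive the result from the chain characterization in Theorem~\ref{Theorem Bv}, reducing everything to a statement about the cores of ``block games''. First I would dispose of the easy inclusion: every marginal vector $\bx^v(\pi)$ arises from the maximal chain $\calH_\pi=\{\pi(1),\pi(1)\pi(2),\dots,N\}$, for which $\calM_{\calH_\pi}(v)=\{\bx^v(\pi)\}$ as already noted after Theorem~\ref{Theorem Bv}; hence $\{\bx^v(\pi)\mid\pi\in\Pi_n\}\subseteq\Mcalv$, and this uses nothing about submodularity. By Theorem~\ref{Theorem Bv} the reverse inclusion $\Mcalv\subseteq\{\bx^v(\pi)\mid\pi\in\Pi_n\}$ will follow once I show that, for submodular $v$, each nonempty component $\calM_{\calH}(v)$ collapses to a single marginal vector.

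The key structural observation is that the defining conditions \eqref{Label Bv 1} and \eqref{Label Bv 2} decouple across the blocks $C_i\setminus C_{i-1}$ of the chain $\calH=\{C_1,\dots,C_k\}$, which partition $N$. For each $i$ I would introduce the block game $w_i$ on the player set $C_i\setminus C_{i-1}$ by $w_i(B)=v(C_{i-1}\cup B)-v(C_{i-1})$. Then \eqref{Label Bv 1} and \eqref{Label Bv 2} say precisely that the restriction of $\bx$ to $C_i\setminus C_{i-1}$ lies in the core $\calC(w_i)$, so that $\calM_{\calH}(v)$ is the Cartesian product $\prod_{i=1}^k\calC(w_i)$. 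A short computation using disjointness of the sets $B$ from $C_{i-1}$ shows that submodularity of $v$ is inherited by every $w_i$.

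The crux is a lemma on submodular games: a submodular game has a nonempty core if and only if it is additive, in which case the core is the single point $(w(\{j\}))_j$. I would prove this by applying submodularity to a coalition $B$ and its complement to get $w(B)+w(M\setminus B)\ge w(M)$; for any core element $\by$ the reverse estimate forces $\by(B)+\by(M\setminus B)=w(M)$ and hence $\by(B)=w(B)$ for all $B$, i.e.\ $w$ is additive and $\by$ is uniquely determined. Applying this to each $w_i$: if $\calM_{\calH}(v)\neq\emptyset$ then every $w_i$ is additive, so within each block the marginal contribution $v(C_{i-1}\cup\{j\})-v(C_{i-1})$ of a player $j$ is independent of the order in which the block is filled. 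Consequently the unique point of $\calM_{\calH}(v)$ coincides with $\bx^v(\pi)$ for any permutation $\pi$ that lists $C_1$, then $C_2\setminus C_1$, and so on. This yields $\Mcalv\subseteq\{\bx^v(\pi)\mid\pi\in\Pi_n\}$ and completes the argument.

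I expect the main obstacle to be the clean identification of $\calM_{\calH}(v)$ with the product of block cores together with the submodular-core lemma; once these are in place, matching the resulting singleton to an actual marginal vector is routine, relying only on the order-independence of marginal contributions in an additive game.
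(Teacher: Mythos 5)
Your proposal is correct and follows essentially the same route as the paper: the paper's proof also reduces, via Theorem~\ref{Theorem Bv}, to showing that each nonempty $\calM_{\calH}(v)$ is a singleton equal to a marginal vector, and its central step --- summing $\bx(A)$ and $\bx((C_i\setminus C_{i-1})\setminus A)$ and using submodularity to force equality in \eqref{Label Bv 2} --- is exactly your ``submodular game with nonempty core is additive'' lemma, applied in place to the block games $v_i^{\calH}$ rather than stated abstractly. The only cosmetic difference is that you package this as a standalone lemma together with the product-of-block-cores description of $\calM_{\calH}(v)$ (cf.\ Lemma~\ref{lemMarg}), whereas the paper carries out the same computation directly on $v$.
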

\begin{proof}
	Let $v$ be a submodular game. By Theorem \ref{Theorem Bv} we need to prove that any nonempty $\calM_{\calH}(v)$ contains a unique allocation, which is necessarily some marginal vector. Let $\calH=(C_1,\dots,C_k)$ be a~chain and $\bx\in\calM_{\calH}(v)$. Take any $i\in\{1,\dots,k\}$ and a~coalition $A\subseteq C_i\setminus C_{i-1}$. We obtain
	$$
	\aligned
	v(C_i) - v(C_{i-1}) &\stackrel{\eqref{Label Bv 1}}{=} \bx(C_i\setminus C_{i-1}) = \bx((C_i\setminus C_{i-1})\setminus A) + \bx(A) \\
	&\stackrel{\eqref{Label Bv 2}}{\ge} v(C_{i-1}\cup ((C_i\setminus C_{i-1})\setminus A)) + v(C_{i-1}\cup A)-2\cdot v(C_{i-1}) \\
	&\ge v(C_i) - v(C_{i-1}),
	\endaligned
	$$
	where the last inequality follows from  submodularity of $v$. This yields
\begin{equation}\label{eq:sbm}
	\bx(A)=  v(C_{i-1} \cup A) -v(C_{i-1}).
\end{equation}
	Let $j\in C_i\setminus C_{i-1}$ and let $B\subseteq C_i\setminus C_{i-1}$ be such that $j\notin B$. It follows from the identity $x_j=\bx(B\cup\{j\}) - \bx(B)$ and from \eqref{eq:sbm} that
	$$
	x_j  =  v(C_{i-1}\cup B\cup \{j\}) -v(C_{i-1}\cup B).
	$$
	Since the above equality holds true for all $i$, $j$ and $B$ as specified above, this immediately implies that $\bx$ is a marginal vector. \qed
\end{proof}

\begin{remark}[Permission structures]
	Any coalitional chain $\calH$ enriches the player set $N$ with an additional structure. Specifically we will briefly mention that this is a special case of a permission structure; see \cite{Gilles10,Gilles92} for details. A \emph{permission structure} on $N$ is a mapping $S\colon N\to 2^N$ satisfying the following condition: $j\in S(i)$ implies that $i\notin S(j)$, for each $i,j\in N$. The players in $S(i)$ are said to be the \emph{successors} of $i\in N$. A \emph{game with permission structure} is a triple $(N,v,S)$, where $N$ is a player set, $v$ is a game and $S$ is a permission structure. A~permission structure is \emph{acyclic} (or \emph{strict}) if there is no sequence of players $i_1,\dots,i_m\in N$ such that $i_1=i_m$ and $i_{k+1}\in S(i_k)$ for every $k=1,\dots,m-1$. Every coalitional chain $\calH=\{C_1,\dots,C_k\}$ gives rise to an acyclic permission structure on $N$. Indeed, it suffices to define a mapping $S_\calH\colon N\to 2^N$ by
	\[
	S_\calH (i) = \begin{cases}
	C_{\ell(i)+1}\setminus C_{\ell(i)} & \ell(i)<k,\\
	\emptyset & \ell(i)=k.
	\end{cases}
	\]
	where $\ell(i)$ is the smallest integer $j$ such that $i\in C_{j}$. Then $S_\calH$ is an acyclic permission structure.
	
	% The collection of all autonomous coalitions\footnote{A coalition $A\subseteq N$ is \emph{autonomous} in a permission structure $S$ if $A\cap S(N\setminus A)=\emptyset$. The autonomous coalitions can act without permission of exterior players.} in $S_\calH$ %coincides with $2^{C_1} \cup \{N\}$. 
%can be expressed as union of all sets $C_i\cup B$ for some $i$ and some $B\subseteq C_{i+1}\setminus C_i$.
	
	Whereas a permission structure $S$ in $(N,v,S)$ is usually determined by an~a~priori known hierarchy among players, we make no such assumption in this paper. By contrast the computation of intermediate set according to \eqref{eqThm1} is based on all the hierarchies among players that are expressible by coalitional chains $\calH$. Thus, there is no preferred coalitional chain $\calH$, albeit only the ones with $\calM_{\calH}(v)\neq \emptyset$ matter. The latter condition makes it possible to claim that, in a sense, any game generates a family of permission structures $S_{\calH}$ satisfying $\calM_{\calH}(v)\neq \emptyset$.
	\exampleEnd
\end{remark}

\subsection{Characterization by marginal games}
We will now show that the intermediate set of any game $v$ can be realized as a finite union of cores of certain games associated with $v$ and chains $\calH$.
\begin{definition}\label{def marg game}
	Let $v\in\Gamma(N)$ and let $\calH=\{C_1,\dots,C_k\}$ be a coalitional chain. For each $i=1,\dots,k$ we define a game $v_i^\calH$ with the player set $C_i\setminus C_{i-1}$:
	$$
	v_i^\calH(B):=v(C_{i-1}\cup B)-v(C_{i-1}), \qquad \text{for all $B\subseteq C_i\setminus C_{i-1}$.}
	$$
	An \emph{$\calH$-marginal game} is the game $v^\calH$ with the player set $N$, where $$v^\calH(B) := \sum_{i=1}^k v_i^\calH(B\cap (C_i\setminus C_{i-1})), \qquad \text{for each $B\subseteq N$.}$$
\end{definition}
Thus, given a game $v$ and a chain $\calH$, the $\calH$-marginal game $v^\calH$ measures aggregated marginal coalitional contributions to all the blocks $C_i\setminus C_{i-1}$. 

\begin{remark}
	In order to define the solution concept called equal split-off set, 
	Branzei et al. introduced in \cite[Section 4.2.1]{Tijs05} the following notion of marginal game based on a game $v$ and an ordered partition. We will reformulate it equivalently using a chain $\calH=\{C_1,\dots,C_k\}$. Define a game $\bar{v}_i^\calH$ with the player set $N\setminus C_{i-1}$ by  $\bar{v}_i^\calH(B):=v(C_{i-1} \cup B)-v(C_{i-1})$, for all $i=1,\dots,k$ and each $B\subseteq N\setminus C_{i-1}$. It is obvious that this definition of marginal game is different from $v_i^\calH$ given above.\exampleEnd
\end{remark}

\noindent
The following lemma says that every  component $\calM_{\calH}(v)$ of the intermediate set is the core of an~$\calH$-marginal game.

\begin{lemma}\label{lemMarg}
 $\calM_\calH(v)=\Ccal(v^\calH)$, for any coalitional chain $\calH=\{C_1,\dots,C_k\}$.
\end{lemma}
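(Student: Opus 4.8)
The plan is to prove the two set inclusions separately, after first recording the efficiency level $v^{\calH}(N)=v(N)$. Taking $B=N$ in Definition~\ref{def marg game} and using that the blocks $C_i\setminus C_{i-1}$ partition $N$, each summand of $v^{\calH}(N)$ collapses to $v_i^{\calH}(C_i\setminus C_{i-1})=v(C_i)-v(C_{i-1})$, so the sum telescopes (since $C_0=\emptyset$, $C_k=N$) to $v(N)$. The whole argument rests on the observation that $v^{\calH}$ is \emph{additive across the blocks}: for any $B\subseteq N$, writing $B_i:=B\cap(C_i\setminus C_{i-1})$ we have $B=\bigsqcup_{i=1}^k B_i$, and both $v^{\calH}(B)=\sum_{i=1}^k v_i^{\calH}(B_i)$ and $\bx(B)=\sum_{i=1}^k \bx(B_i)$.

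For the inclusion $\calM_{\calH}(v)\subseteq\Ccal(v^{\calH})$, let $\bx\in\calM_{\calH}(v)$. Summing the block equalities \eqref{Label Bv 1} over $i$ and telescoping gives $\bx(N)=v(N)=v^{\calH}(N)$, which is efficiency. For an arbitrary $B\subseteq N$ I would apply the marginal-stability inequality \eqref{Label Bv 2} to each piece $B_i\subseteq C_i\setminus C_{i-1}$, obtaining $\bx(B_i)\ge v(C_{i-1}\cup B_i)-v(C_{i-1})=v_i^{\calH}(B_i)$; summing over $i$ and invoking the additive decomposition above yields $\bx(B)\ge v^{\calH}(B)$. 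Hence $\bx\in\Ccal(v^{\calH})$.

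The reverse inclusion $\Ccal(v^{\calH})\subseteq\calM_{\calH}(v)$ is where the only genuine subtlety lies. Given $\bx\in\Ccal(v^{\calH})$, the inequalities \eqref{Label Bv 2} are immediate: for $B\subseteq C_i\setminus C_{i-1}$ only the $i$-th summand of $v^{\calH}(B)$ survives, so $v^{\calH}(B)=v_i^{\calH}(B)$ and the core inequality $\bx(B)\ge v^{\calH}(B)$ is precisely \eqref{Label Bv 2}. The point requiring care is recovering the \emph{equalities} \eqref{Label Bv 1} from the core, which only supplies inequalities. I would specialize the core inequality to the full blocks $B=C_i\setminus C_{i-1}$, giving $\bx(C_i\setminus C_{i-1})\ge v(C_i)-v(C_{i-1})$ for each $i$; summing and telescoping yields $\bx(N)\ge v(N)$. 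But efficiency gives $\bx(N)=v^{\calH}(N)=v(N)$, so the sum of the nonnegative slacks $\bx(C_i\setminus C_{i-1})-(v(C_i)-v(C_{i-1}))$ vanishes, forcing each slack to be zero and hence \eqref{Label Bv 1}. This ``no slack'' step is the main obstacle; everything else is the routine block bookkeeping recorded above, and combining the two inclusions gives $\calM_\calH(v)=\Ccal(v^\calH)$.
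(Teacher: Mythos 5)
Your proposal is correct and follows essentially the same route as the paper's own proof: both directions rest on the block decomposition $v^\calH(B)=\sum_i v_i^\calH(B\cap(C_i\setminus C_{i-1}))$, the telescoping identity $v^\calH(N)=v(N)$, and the same ``no slack'' argument that turns the per-block core inequalities into the equalities \eqref{Label Bv 1} once efficiency is imposed. No gaps; this matches the paper's argument step for step.
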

\begin{proof}
	According to the definition of $v^\calH$, we have
	\begin{equation}\label{Label vt}
	v^\calH(B) = \sum_{i=1}^k v_i^\calH(B\cap (C_i\setminus C_{i-1})) = \sum_{i=1}^k\left[v(C_{i-1}\cup (B\cap (C_{i}\setminus C_{i-1}))) - v(C_{i-1})\right]
	\end{equation}
	for any $B\subseteq N$. Let $\bx\in\Ccal(v^\calH)$ be arbitrary and consider any $i=1,\dots,k$ and any $B\subseteq C_i\setminus C_{i-1}$. Then from the definition of core and \eqref{Label vt} we obtain
	\begin{equation}\label{Label vt 1}
	\bx(B)\ge v^\calH(B) = v(C_{i-1}\cup B) - v(C_{i-1}).
	\end{equation}
	By taking $B=C_i\setminus C_{i-1}$,
	\begin{equation}\label{Label vt 2}
	\bx(C_i\setminus C_{i-1})\ge v(C_i) - v(C_{i-1}).
	\end{equation}
	Plugging $B=N$ into \eqref{Label vt} and once more using the definition of core yield
	$$
	\bx(N) = v^\calH(N) = \sum_{i=1}^k\left[v(C_i) - v(C_{i-1})\right],
	$$
	which implies together with \eqref{Label vt 2},
	$$
	\bx(C_i\setminus C_{i-1})= v(C_i) - v(C_{i-1}).
	$$
	But the last equality and \eqref{Label vt 1} means that $\bx\in\calM_\calH(v)$.
	
	Conversely, assume $\bx\in\calM_\calH(v)$ and fix any $B\subseteq N$. Then
	\begin{equation*}
    \begin{split}
	\bx(B) = &\sum_{i=1}^k \bx(B\cap (C_i\setminus C_{i-1}))\ge \\ &\sum_{i=1}^k\left[v(C_{i-1}\cup (B\cap (C_i\setminus C_{i-1}))) - v(C_{i-1})\right] =v^\calH(B),
		\end{split}
	\end{equation*}
	where the inequality follows directly from \eqref{Label Bv 2} and the last equality from \eqref{Label vt}. If $B=N$ then the inequality above becomes an equality by \eqref{Label Bv 1}, which means that $\bx\in\Ccal(v^\calH)$ holds true. \qed
\end{proof}

\begin{theorem}\label{Thm IS Cores}
	Let $v\in\Gamma(N)$. Then
	\begin{equation*}%\label{eqThm1}
	\Mcalv=\bigcup_{\calH\in\scrC}\Ccal(v^\calH).
	\end{equation*}
\end{theorem}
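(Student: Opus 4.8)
The plan is to obtain this theorem as an immediate consequence of the two results that precede it, namely the chain characterization in Theorem~\ref{Theorem Bv} and the identification of each chain-component with a core in Lemma~\ref{lemMarg}. The substantive content has already been established in those two statements, so the present proof is essentially a substitution. I would begin by invoking Theorem~\ref{Theorem Bv} to write
\[
\Mcalv = \bigcup_{\calH\in\scrC} \calM_{\calH}(v).
\]

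Next, I would apply Lemma~\ref{lemMarg} term by term. Since that lemma asserts $\calM_{\calH}(v) = \Ccal(v^\calH)$ for \emph{every} coalitional chain $\calH=\{C_1,\dots,C_k\}$, and the union in the display above ranges over exactly the set $\scrC$ of all such chains, I may replace each set $\calM_{\calH}(v)$ in the union by $\Ccal(v^\calH)$ without changing the union. This yields
\[
\Mcalv = \bigcup_{\calH\in\scrC} \calM_{\calH}(v) = \bigcup_{\calH\in\scrC} \Ccal(v^\calH),
\]
which is precisely the claimed identity. No compatibility or well-definedness condition needs to be checked, because Lemma~\ref{lemMarg} already guarantees the set equality for each individual index $\calH$, and the index set $\scrC$ is identical on both sides.

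I do not expect any genuine obstacle here: the theorem is a direct corollary, and the only "work'' is recognizing that the two preceding results compose cleanly over the common index set $\scrC$. The real difficulties lie upstream---specifically in the proof of Theorem~\ref{Theorem Bv} (deferred to Appendix~\ref{Appendix proof}), which translates the analytic definition via the limiting superdifferential into the combinatorial chain conditions \eqref{Label Bv 1}--\eqref{Label Bv 2}, and in verifying the two inclusions of Lemma~\ref{lemMarg} via the definition of the $\calH$-marginal game. Once those are in hand, the present statement records the reformulation of the intermediate set as a finite union of cores of marginal games, providing the polyhedral decomposition announced in the introduction.
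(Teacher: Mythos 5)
Your proof is correct and is exactly the argument the paper gives: Theorem~\ref{Theorem Bv} provides the decomposition $\Mcalv=\bigcup_{\calH\in\scrC}\calM_{\calH}(v)$, and Lemma~\ref{lemMarg} replaces each $\calM_{\calH}(v)$ by $\Ccal(v^\calH)$ over the same index set. Nothing further is needed.
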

\begin{proof}
	It suffices to combine Theorem \ref{Theorem Bv} with Lemma \ref{lemMarg}.
\end{proof}

\section{Properties of Intermediate Set}\label{Section properties}

In this section the intermediate set is compared with the core and the Weber set. We list important properties and show whether they are satisfied for those solution concepts. Further, we briefly discuss the relation of the intermediate set to other set-valued solutions.

\subsection{Comparison with the core and the Weber set}
The properties of the intermediate set are summarized in Table \ref{Table properties}. We follow the approach presented in \cite[Section 8.11]{PelegSudholter07}, where numerous properties and solution concepts are listed together with  conditions  under which a certain property is satisfied by a given solution concept. For the reader's convenience we repeat the definitions and include the known properties of the core and the Weber set.

\begin{definition}\label{Def games properties}
	Let $\emptyset\neq \Omega\subseteq \Gamma(N)$. We say that a solution $\sigma\colon \Omega\to 2^{\R^n}$ satisfies
\begin{itemize}
\item \emph{nonemptiness (NE)} if $\sigma(v)\neq \emptyset$ for every $v\in\Omega$;
\item \emph{convex--valuedness (CON)} if $\sigma(v)$ is convex for every $v\in\Omega$;
\item \emph{Pareto optimality (PO)} if $\bx(N)=v(N)$ for every $v\in\Omega$ and every $\bx\in\sigma(v)$;
\item \emph{individual rationality (IR)} if $x_i\ge v(\{i\})$ for every $i\in N$, every $v\in\Omega$ and  every $\bx\in\sigma(v)$;
\item \emph{superadditivity (SUPA)} if $\sigma(v_1)+\sigma(v_2)\subseteq \sigma(v_1+v_2)$ for every $v_1,v_2\in\Omega$ such that $v_1+v_2\in\Omega$;
\item \emph{subadditivity (SUBA)} if $\sigma(v_1)+\sigma(v_2)\supseteq \sigma(v_1+v_2)$ for every $v_1,v_2\in\Omega$  such that $v_1+v_2\in\Omega$;
\item \emph{additivity (ADD)} if $\sigma$ is both subadditive and superadditive;
\item \emph{anonymity (AN)} if $\pi(\sigma (v))=\sigma (\pi v)$ for every $v\in\Omega$ and $\pi\in\Pi_n$ such that $\pi v\in\Omega$, where $\pi(\sigma (v))=\{(x_{\pi(1)},\dotsc,x_{\pi(n)})\mid \bx\in\sigma(v)\}$ and $\pi v$ is defined by $\pi v(A)=v(\pi^{-1}(A))$, $A\subseteq N$;
\item \emph{equal treatment property (ETP)} if $x_i=x_j$ for every $\bx\in\sigma(v)$, every $v\in\Omega$ and any \emph{substitutes}  $i,j\in N$ in $v$, that is, $v(A\cup\{i\})=v(A\cup \{j\})$, for each $A\subseteq N\setminus \{i,j\}$;
\item \emph{reasonableness (RE)} if for every $v\in\Omega$ and for every $\bx\in\sigma(v)$ we have $b_i^{min}(v)\le x_i\le b_i^{max}(v)$ for all $i\in N$, where
$$
\aligned
b_i^{min} &= \min_{A\subseteq N\setminus\{i\}} (v(A\cup \{i\}) - v(A)),\\
b_i^{max} &= \max_{A\subseteq N\setminus\{i\}} (v(A\cup \{i\}) - v(A));
\endaligned
$$
\item \emph{covariant under strategic equivalence (COV)} if for every $v,w\in\Omega$, every $\alpha>0$ and every additive game $z$ such that $w=\alpha v+z$, we have $\sigma(w) = \alpha\sigma(v) + \{(z(\{1\}),\dotsc,z(\{n\})\}$;
\item \emph{null player property (NP)} if for every $v\in\Omega$ and every $\bx\in \sigma(v)$, we have $x_i=0$ whenever player $i$ is a \emph{null player}, that is, $v(A\cup \{i\})=v(A)$ for all $A\subseteq N$;
\item \emph{dummy property (DUM)} if for every $v\in\Omega$ and every $\bx\in \sigma(v)$ we have $x_i=v(\{i\})$ whenever player $i$ is a \emph{dummy player}, that is, $v(A\cup \{i\})=v(A)+v(\{i\})$ for all $A\subseteq N\setminus\{i\}$.
\end{itemize}
\end{definition}

\begin{table}[!ht]
\centering
\begin{tabular}{l|lll}
& $\Ccalv$ & $\Mcalv$ & $\Wcalv$ \\\hline
Nonemptiness & $\bullet$  & $\checkmark$ & $\checkmark$ \\ 
Convex--valuedness & $\checkmark$ &  & $\checkmark$ \\
Pareto optimality & $\checkmark$ & $\checkmark$ & $\checkmark$\\
Individual rationality & $\checkmark$ & $\bullet$ & $\bullet$ \\\hline
Superadditivity  & $\checkmark$ &  &  \\
Subadditivity  &  & $\checkmark$ & $\checkmark$ \\
Additivity  &  &  & \\\hline
Anonymity  & $\checkmark$ & $\checkmark$ & $\checkmark$ \\  
Equal treatment property &  &  &  \\
Reasonableness  & $\checkmark$ & $\checkmark$ & $\checkmark$ \\
Covariance  & $\checkmark$ & $\checkmark$ & $\checkmark$ \\ 
Null player property  & $\checkmark$ & $\checkmark$ & $\checkmark$ \\
Dummy property  & $\checkmark$ & $\checkmark$ & $\checkmark$
\end{tabular}
\caption{Fulfillment of selected properties. The mark $\checkmark$ means that the property is satisfied on $\Omega=\Gamma(N)$, while $\bullet$ means that  only a ``significant'' subclass of games $\Omega\subsetneq \Gamma(N)$ has the corresponding property. The empty space indicates that the property is not satisfied by every game. }
\label{Table properties}
\end{table}
\begin{comment}
\\\hline
Reduced game property (RGP) & Y & N & \\
Weak RGP (WRGP) & Y & N & \\  
Reconfirmation property (RCP) & Y &   & \\  
Converse RGP (CRGP) & Y & & \\  
Aggregate monotonicity (AM) & Y & Y & Y \\  
Strong aggregate monotonicity (SAM) & Y & Y & Y
\end{comment}

Not all the proofs are presented here. We included only those which are nontrivial, important or use the  concepts of nonsmooth calculus. In all other cases the reader is referred to an analogous comparison \cite[Table 8.11.1]{PelegSudholter07}. Well-known facts about the core are included in Table \ref{Table properties} for the sake of completeness.

\begin{lemma}
Both $\calM$ and $\calW$ satisfy NE. 
\end{lemma}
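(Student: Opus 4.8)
The plan is to verify nonemptiness for the two solutions separately; in both cases the claim follows almost immediately from facts already recorded in the excerpt.

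For the Weber set I would argue straight from the definition. The symmetric group $\Pi_n$ is never empty, so for any $\pi\in\Pi_n$ the marginal vector $\bx^v(\pi)$ defined by \eqref{def:MargVec} is a genuine element of $\dR^n$. Hence $\{\bx^v(\pi)\mid \pi\in\Pi_n\}$ is a nonempty finite set and its convex hull $\calW(v)$ is a nonempty convex polytope. Nothing further is required.

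For the intermediate set the shortest route is simply to cite item~1 of Lemma~\ref{Lemma Icalv basic properties}, where $\Mcalv\neq\emptyset$ is already derived from the general existence theory for limiting subdifferentials in \cite{rockafellar.wets.1998}. If one prefers an argument internal to the combinatorial picture, I would instead use the chain characterization: as noted right after Theorem~\ref{Theorem Bv}, a maximal chain $\calH$ coming from a permutation $\pi$ satisfies $\calM_{\calH}(v)=\{\bx^v(\pi)\}$, so Theorem~\ref{Theorem Bv} yields $\bx^v(\pi)\in\Mcalv$. Since at least one permutation always exists, $\Mcalv$ is nonempty.

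These two threads can in fact be merged into a single observation: every marginal vector lies in $\Mcalv$, and $\Mcalv\subseteq\Wcalv$ by \eqref{Label sets inclusions}, so the existence of one marginal vector simultaneously witnesses $\Mcalv\neq\emptyset$ and $\Wcalv\neq\emptyset$. There is no genuine obstacle in this lemma; the only decision is which previously established fact to invoke, and I would favor the marginal-vector argument so as not to lean on the variational-analysis machinery beyond what is strictly needed.
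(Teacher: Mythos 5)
Your proposal is correct, but your preferred route differs from the paper's. The paper disposes of both claims in one line by citing the general variational-analysis fact that the limiting and Clarke superdifferentials of a (locally) Lipschitz function are nonempty \cite[Corollary 8.10, Theorem 9.13]{rockafellar.wets.1998}, applied to the Lov\'asz extension; this is exactly the argument you mention as the ``shortest route'' via item~1 of Lemma~\ref{Lemma Icalv basic properties} (which is in fact the same statement, proved the same way). Your favored argument instead exhibits an explicit witness: a marginal vector $\bx^v(\pi)$ lies in $\calW(v)$ by definition, and lies in $\Mcalv$ because a maximal chain $\calH$ satisfies $\calM_{\calH}(v)=\{\bx^v(\pi)\}$ (conditions \eqref{Label Bv 1}--\eqref{Label Bv 2} reduce to the definition \eqref{def:MargVec} when every block is a singleton), so Theorem~\ref{Theorem Bv} gives $\bx^v(\pi)\in\Mcalv$. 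This is more elementary and constructive, and there is no circularity since the proof of Theorem~\ref{Theorem Bv} in Appendix~\ref{Appendix proof} does not use nonemptiness; the trade-off is that you lean on Theorem~\ref{Theorem Bv}, the paper's main technical result, to prove a fact the paper gets for free from the abstract existence theory. Either justification is acceptable here.
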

\begin{proof}
Since the limiting and the Clarke superdifferential of a Lipschitz function are nonempty by \cite[Corollary 8.10, Theorem 9.13]{rockafellar.wets.1998}, both $\Mcalv$ and $\Wcalv$ are nonempty for any game $v$. \qed
\end{proof}

It follows directly from the corresponding definitions that both $\calC$ and $\calW$ satisfy CON. By contrast, the set $\Mcalv$ need not be convex; see Example~\ref{Example gloves simple}. Since PO is satisfied by $\calW$, it is also satisfied by any smaller solution concept.

The next lemma states that property IR of both $\calM$ and $\Wcal$ characterizes the so-called \emph{weakly superadditive (zero-monotonic) games}, which are defined as elements of 
$$
\Gamma^*(N)=\{v\in\Gamma(N)\mid v(A\cup \{i\})\ge v(A)+ v(\{i\})\text{ for all $A\subseteq N$  and $i\in N\setminus A$}\}.
$$

\begin{lemma}\label{Lemma Gamma star}
Let $\emptyset\neq \Omega \subseteq \Gamma(N)$. Then the following three claims are equivalent:
\begin{enumerate}
\item $\Omega\subseteq\Gamma^*(N)$;
\item $\calM$ satisfies IR on $\Omega$;
\item $\Wcal$ satisfies IR on $\Omega$.
\end{enumerate}
\end{lemma}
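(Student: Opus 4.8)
The plan is to prove the cycle of implications $(1)\Rightarrow(3)\Rightarrow(2)\Rightarrow(1)$, relying on the inclusion $\Ccalv\subseteq\Mcalv\subseteq\Wcalv$ established in Lemma~\ref{Lemma Icalv basic properties} together with the observation that individual rationality is a conjunction of the \emph{linear} inequalities $x_i\ge v(\{i\})$, $i\in N$. Two of the three links are essentially bookkeeping, while the content sits in the passage from IR of $\calM$ back to weak superadditivity.

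For $(1)\Rightarrow(3)$ I would assume that every $v\in\Omega$ lies in $\Gamma^*(N)$ and verify IR on the whole Weber set by checking it only at the generating marginal vectors. Indeed, since $\Wcalv=\Conv\{\bx^v(\pi)\mid\pi\in\Pi_n\}$ and each IR constraint is linear, a convex combination of IR-feasible points is again IR-feasible, so it suffices to bound each $\bx^v(\pi)$. By the definition of the marginal vector, its $i$-th coordinate equals $v(S\cup\{i\})-v(S)$, where $S=\bigcup_{j<\pi^{-1}(i)}\{\pi(j)\}$ is the set of predecessors of $i$; weak superadditivity applied with this $S$ gives $v(S\cup\{i\})-v(S)\ge v(\{i\})$. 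Hence every marginal vector, and therefore every point of $\Wcalv$, satisfies IR. The step $(3)\Rightarrow(2)$ is then immediate: because $\Mcalv\subseteq\Wcalv$ for each $v\in\Omega$, any IR bound that holds on the Weber set is inherited verbatim by the intermediate set.

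For $(2)\Rightarrow(1)$ I would argue by contraposition. Suppose some $v\in\Omega$ is not weakly superadditive, so there exist $A\subseteq N$ and $i\in N\setminus A$ with $v(A\cup\{i\})<v(A)+v(\{i\})$. Pick a permutation $\pi\in\Pi_n$ that lists the members of $A$ first, then $i$, then the remaining players; for this $\pi$ the $i$-th coordinate of $\bx^v(\pi)$ equals $v(A\cup\{i\})-v(A)<v(\{i\})$, so $\bx^v(\pi)$ violates IR. It remains to note that this marginal vector actually belongs to $\Mcalv$: the maximal chain $\calH=\{\pi(1),\pi(1)\pi(2),\dots,N\}$ satisfies $\calM_{\calH}(v)=\{\bx^v(\pi)\}$, so $\bx^v(\pi)\in\Mcalv$ by Theorem~\ref{Theorem Bv}. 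Consequently $\calM$ fails IR on $\Omega$, which is the contrapositive of $(2)\Rightarrow(1)$.

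The main obstacle is precisely this last direction: the crux is recognizing that \emph{all} marginal vectors sit inside the intermediate set (through the maximal chains in Theorem~\ref{Theorem Bv}), which lets one convert a single violation of weak superadditivity into an explicit IR-violating allocation in $\Mcalv$ by choosing the right permutation. By comparison, the remaining implications are routine, following from the inclusion chain $\Mcalv\subseteq\Wcalv$ and the stability of the linear IR constraints under taking convex hulls.
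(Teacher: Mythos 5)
Your proof is correct and follows essentially the same route as the paper: weak superadditivity forces IR at the marginal vectors and hence on their convex hull $\Wcalv$, IR passes down to $\Mcalv$ by the inclusion $\Mcalv\subseteq\Wcalv$, and a violation of weak superadditivity produces an IR-violating marginal vector lying in $\Mcalv$. Your explicit justification that marginal vectors belong to $\Mcalv$ via maximal chains and Theorem~\ref{Theorem Bv} is a welcome elaboration of a step the paper states without proof, but it is not a different argument.
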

\begin{proof}
Let $v\in \Omega\subseteq \Gamma^*(N)$. Then for any marginal vector $\bx$ and every $i\in N$, there exists $A\subseteq N\setminus\{i\}$ such that
\[
x_i = v(A\cup\{i\}) -v(A) \ge v(\{i\}).
\]
Since $\Wcalv$ is the convex hull of all the marginal vectors, we have shown that~$\Wcal$ satisfies IR on $\Omega$. This implies that $\calM$ satisfies IR on $\Omega$ as well.

Conversely, assume that $\calM$ satisfies IR on some family of games $\Omega\subseteq\Gamma(N)$. By way of contradiction, let $\Omega\not\subseteq \Gamma^*(N)$. Then there exists $v\in \Omega$, some $A\subseteq N$ and $i\in N\setminus A$ such that $ v(A\cup\{i\}) -v(A) < v(\{i\})$. But this means that there is a marginal vector $\bx$ satisfying
$$
x_i = v(A\cup\{i\}) -v(A) < v(\{i\}).
$$
Since every marginal vector lies in $\Mcalv$, we have arrived at a contradiction with IR of $\calM$ on $\Omega$, and the proof is finished. 
\qed\end{proof}

Concerning SUPA, SUBA and ADD, the proofs are consequences of the general results about superdifferentials/subdifferentials.

\begin{lemma}
 $\calM$ and $\calW$ are subadditive and none of them is additive, in general.
\end{lemma}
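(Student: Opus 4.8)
The plan is to separate the two assertions. Subadditivity will follow formally from the variational description of the solutions together with the linearity of the Lovász extension, whereas the failure of additivity will be established by exhibiting games for which superadditivity breaks down.

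For subadditivity I would start from the identities $\calM(v)=\sdl\Gl(\one)$ and $\calW(v)=\sdc\Gl(\one)$ and from the fact, recorded in Section~\ref{sec:coreweb}, that $v\mapsto\Gl$ is linear, so that $\widehat{v_1+v_2}=\hat{v}_1+\hat{v}_2$. Both $\hat{v}_1$ and $\hat{v}_2$ are continuous and piecewise affine, hence locally Lipschitz, so the sum rule for generalized superdifferentials applies: for locally Lipschitz $f,g$ the limiting and the Clarke superdifferential $\partial$ satisfy $\partial(f+g)(x)\subseteq\partial f(x)+\partial g(x)$ (see \cite{rockafellar.wets.1998,mordukhovich.2006}). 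Evaluating at $x=\one$ and using linearity gives
\[
\calM(v_1+v_2)=\sdl(\hat{v}_1+\hat{v}_2)(\one)\subseteq\sdl\hat{v}_1(\one)+\sdl\hat{v}_2(\one)=\calM(v_1)+\calM(v_2),
\]
and the identical computation with $\sdc$ in place of $\sdl$ yields $\calW(v_1+v_2)\subseteq\calW(v_1)+\calW(v_2)$. Hence both $\calM$ and $\calW$ satisfy SUBA. For $\calW$ one can also argue elementarily: since $\bx^{v_1+v_2}(\pi)=\bx^{v_1}(\pi)+\bx^{v_2}(\pi)$ for every $\pi$, each marginal vector of $v_1+v_2$ lies in $\calW(v_1)+\calW(v_2)$, and passing to the convex hull gives the inclusion.

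Since SUBA holds, a solution is additive if and only if it is superadditive, so it remains to make SUPA fail. For $\calM$ a two-player example suffices: put $v_1=v_2=v$ with $v(\{1\})=v(\{2\})=v(\{1,2\})=1$. This $v$ is submodular, so Lemma~\ref{Lemma supersub} gives $\calM(v)=\{(1,0),(0,1)\}$, and $v_1+v_2=2v$ is again submodular with $\calM(2v)=\{(2,0),(0,2)\}$; the point $(1,1)=(1,0)+(0,1)$ belongs to $\calM(v_1)+\calM(v_2)$ but not to $\calM(v_1+v_2)$. For $\calW$ the convexity of the Weber set reabsorbs such isolated cross terms, so I would use a genuinely two-dimensional configuration on $N=\{1,2,3\}$: let $v_1(A)=\max\{0,\abs{A}-1\}$ and let $v_2$ satisfy $v_2(\{i\})=1$, $v_2(\{i,j\})=2$, $v_2(N)=2$. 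From the monotonicity of marginal contributions, $v_1$ is supermodular and $v_2$ submodular, and both have the same Weber set $T=\conv\{(0,1,1),(1,0,1),(1,1,0)\}$. Hence $\calW(v_1)+\calW(v_2)=T+T=2T=\conv\{(0,2,2),(2,0,2),(2,2,0)\}$, while the marginal vectors of $v_1+v_2$ are the permutations of $(2,1,1)$, so $\calW(v_1+v_2)=\conv\{(2,1,1),(1,2,1),(1,1,2)\}$. Each permutation of $(2,1,1)$ is the midpoint of an edge of $2T$, so $\calW(v_1+v_2)$ is the medial triangle of $2T$ and is properly contained in it; thus SUPA fails for $\calW$ as well.

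The routine part is subadditivity, which drops out of the sum rule once local Lipschitzness of the Lovász extension is noted. The delicate point is the Weber counterexample: because $\calW(v)$ is convex, a cross term produced by the Minkowski sum is harmless unless it lies strictly outside the convex hull of the matched marginal sums, and many natural candidates turn out to be accidentally additive. The triangle configuration is designed precisely to force such an escaping cross term, the coincidence $\calW(v_1)=\calW(v_2)=T$ making the comparison of $2T$ with its medial triangle immediate.
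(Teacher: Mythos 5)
Your subadditivity argument is exactly the paper's: the authors also prove SUBA for both $\calM$ and $\calW$ by applying the superdifferential sum rule (their Proposition~\ref{Proposition sum rule}, stated for piecewise affine functions) to $\widehat{v_1+v_2}=\hat{v}_1+\hat{v}_2$, so that part is a match. Where you genuinely go beyond the paper is the second half of the claim: the paper's one-line proof only establishes the inclusion $\partial(\hat v_1+\hat v_2)(\one)\subseteq\partial\hat v_1(\one)+\partial\hat v_2(\one)$ and leaves the failure of additivity (equivalently, of SUPA) undocumented, whereas you exhibit explicit witnesses. Both of your counterexamples check out: for $\calM$, the two-player game with $v(\{1\})=v(\{2\})=v(N)=1$ is submodular, Lemma~\ref{Lemma supersub} gives $\calM(v)=\{(1,0),(0,1)\}$ and $\calM(2v)=\{(2,0),(0,2)\}$, and $(1,1)$ escapes; for $\calW$, the symmetric games with increment sequences $(0,1,1)$ and $(1,1,0)$ both have Weber set $T=\conv\{(0,1,1),(1,0,1),(1,1,0)\}$, while their sum has increments $(1,2,1)$ and Weber set equal to the medial triangle of $2T$, so the vertex $(0,2,2)=(0,1,1)+(0,1,1)$ of $T+T$ lies outside $\calW(v_1+v_2)$. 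Your observation that convexity of $\calW$ forces a more carefully engineered example than for $\calM$ is apt. The only cosmetic caveat: rather than invoking a sum rule for general locally Lipschitz functions (where the limiting superdifferential calculus requires some care), it is cleaner to cite the rule in the form the paper uses it, namely for piecewise affine functions, which is all the Lov\'asz extension needs.
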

\begin{proof}
It suffices to apply Proposition \ref{Proposition sum rule} from Appendix \ref{App:Super} to the Lov\' asz extension of a game.\qed
\end{proof}

Anonymity holds true for both $\calM$ and $\calW$ by Proposition \ref{Proposition Cv Wv superdifferential} and Lemma~\ref{Lemma sets zero one}, since all the discussed superdifferentials have an analogous property. Since ETP is in general violated by $\calC$, it cannot hold for any larger solution concept. Similarly, property RE is true for $\calW$ and thus for any solution $\sigma$ included in~$\calW$.

\begin{lemma}
 $\calC$, $\calM$ and $\calW$ satisfy COV.
\end{lemma}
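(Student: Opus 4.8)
The plan is to exhibit a single affine change of variables that carries $\sigma(v)$ onto $\sigma(w)$ for all three solutions simultaneously. Write $w=\alpha v+z$ with $\alpha>0$ and $z$ additive, and set $\zeta=(z(\{1\}),\dots,z(\{n\}))$; by additivity $z(A)=\sum_{i\in A}z(\{i\})=\zeta(A)$ for every $A\subseteq N$, where I use the notation $\zeta(A):=\sum_{i\in A}\zeta_i$. Consider the affine bijection $T\colon\R^n\to\R^n$ given by $T(\by)=\alpha\by+\zeta$. Since $T(\sigma(v))=\alpha\sigma(v)+\{\zeta\}$, property COV is exactly the identity $\sigma(w)=T(\sigma(v))$, so it suffices to prove this for $\sigma\in\{\calC,\calM,\calW\}$.

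First I would treat the core. Substituting $\bx=T(\by)=\alpha\by+\zeta$ into the defining relations of $\calC(w)$ and using $w(A)=\alpha v(A)+\zeta(A)$, each constraint $\bx(A)\ge w(A)$ becomes $\alpha\by(A)+\zeta(A)\ge\alpha v(A)+\zeta(A)$, which (because $\alpha>0$) is equivalent to $\by(A)\ge v(A)$, and $\bx(N)=w(N)$ becomes $\by(N)=v(N)$. Hence $\bx\in\calC(w)$ iff $\by\in\calC(v)$, giving $\calC(w)=T(\calC(v))$. For the Weber set the key point is that an additive game contributes the constant $\zeta_i$ to every marginal contribution of player $i$, so $\bx^{w}(\pi)=\alpha\bx^{v}(\pi)+\zeta=T(\bx^v(\pi))$ for each permutation $\pi$, directly from \eqref{def:MargVec}. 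As $T$ is affine it commutes with convex hulls, whence $\calW(w)=\Conv\{\bx^w(\pi)\}=T(\Conv\{\bx^v(\pi)\})=T(\calW(v))$.

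Finally the intermediate set, where I would use the chain characterization of Theorem~\ref{Theorem Bv} rather than the superdifferential definition. Fix a chain $\calH=\{C_1,\dots,C_k\}$. Substituting $\bx=T(\by)$ into \eqref{Label Bv 1} and \eqref{Label Bv 2}, and using $w(C_i)-w(C_{i-1})=\alpha\bigl(v(C_i)-v(C_{i-1})\bigr)+\zeta(C_i\setminus C_{i-1})$ together with $w(C_{i-1}\cup B)-w(C_{i-1})=\alpha\bigl(v(C_{i-1}\cup B)-v(C_{i-1})\bigr)+\zeta(B)$ (valid since $B\subseteq C_i\setminus C_{i-1}$ is disjoint from $C_{i-1}$), one sees again because $\alpha>0$ that $\bx\in\calM_\calH(w)$ iff $\by\in\calM_\calH(v)$; that is, $\calM_\calH(w)=T(\calM_\calH(v))$. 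Taking the union over $\scrC$ and using that $T$ distributes over unions yields $\calM(w)=T(\calM(v))$ via \eqref{eqThm1}.

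I expect no deep obstacle: every step is a routine substitution, and the two things that must be watched are that $\alpha>0$ is genuinely used whenever an inequality is rescaled (so inequality directions are preserved) and that the translation vector $\zeta$ cancels correctly on both sides of each equality constraint. The mildly delicate bookkeeping is in the intermediate set, where the argument must run uniformly over all blocks $C_i\setminus C_{i-1}$ of every chain and the distributivity of the affine map $T$ over the union in \eqref{eqThm1} must be invoked. As an alternative one could argue entirely through superdifferentials: the Lov\'asz extension $\hat z$ of an additive game is the linear functional $\by\mapsto\langle\zeta,\by\rangle$, so by linearity of $v\mapsto\hat v$ one has $\widehat{w}=\alpha\hat v+\langle\zeta,\cdot\rangle$, and the positive-scaling rule together with the exact sum rule for a smooth (here linear) summand, applied to the Fr\'echet, limiting, and Clarke superdifferentials at $\one$, gives all three identities at once; the only point to check in that route is that each of the three superdifferentials obeys the exact smooth sum rule.
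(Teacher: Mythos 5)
Your proof is correct, but your primary route is genuinely different from the paper's. The paper dispatches all three solutions in three lines via the superdifferential machinery: since $v\mapsto\hat v$ is linear and the Lov\'asz extension of an additive game is a linear function, one writes $\hat w=\alpha\hat v+\hat z$ with $\hat z$ smooth and applies the exact sum rule (Proposition~\ref{Proposition sum rule}) together with positive homogeneity of the superdifferentials --- exactly the ``alternative'' you sketch in your closing sentences, including the one point you correctly flag, namely that the sum rule is exact because the linear summand is smooth. Your main argument instead verifies COV by direct substitution into the defining systems: the core inequalities, the marginal-vector formula \eqref{def:MargVec}, and the chain systems \eqref{Label Bv 1}--\eqref{Label Bv 2} via Theorem~\ref{Theorem Bv}. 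This is more elementary (no variational analysis is needed once Theorem~\ref{Theorem Bv} is granted) and has the virtue of making visible exactly where $\alpha>0$ and the additivity identity $z(A)=\zeta(A)$ enter; its cost is that it leans on Theorem~\ref{Theorem Bv}, whose proof is itself the technical heart of the paper, and it must be run separately for each solution concept, whereas the paper's argument treats all three superdifferentials uniformly in one stroke. Both routes are sound; your bookkeeping on the blocks $C_i\setminus C_{i-1}$ (using that $B$ is disjoint from $C_{i-1}$ so that $z(C_{i-1}\cup B)-z(C_{i-1})=\zeta(B)$) is the only delicate step and you handle it correctly.
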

\begin{proof}
Let $v,w\in\Omega$ and $\alpha>0$ be such that $w=\alpha v+z$, where $z$ is an additive game. 
Since the mapping $v\in \Gamma(N) \mapsto \hat{v}$ is linear, we obtain
$$
\hat{w}(\bx) = \alpha\Gl(\bx) + \hat{z}(\bx), \quad \bx\in\R^n.
$$
As $z$ is an additive game, its Lov\'asz extension $\hat{z}$ is a linear function. The sought  result is then  a~consequence of Proposition \ref{Proposition sum rule} from Appendix \ref{App:Super}.
\qed\end{proof}

As regards the null player property, it is easy to see that the Weber set has NP. 
Hence, it follows from the inclusion \eqref{Label sets inclusions} that the intermediate set has NP as well. Since NP and COV imply DUM by \cite[Remark 4.1.18]{PelegSudholter07}, we have completed the whole Table \ref{Table properties}.

\subsection{Relation to other solution concepts}
We will briefly comment on the relation between the intermediate set and selected solution concepts for coalitional games. Our sample contains only the solutions which bear a formal resemblance to the intermediate set or those containing the core. We omit the discussion of the solution concepts whose position with respect to the intermediate set is clear due to a known result, such as the selectope, which is always at least as large as the Weber set \cite{DerksHallerPeters00}. For the sake of brevity we do not repeat definitions of the discussed solutions, but refer to the literature instead.

\begin{description}
\item[\textbf{Solutions for Coalition Structures}]{ A \emph{coalition structure} in an $n$-person game is a  partition $\{B_1,\dotsc,B_m\}$ of the player set $N$. Each coalition structure of Aumann and Dreze \cite{AumannDreze74} induces the core solution with respect to that coalition structure. Since any coalitional chain $\calH=\{C_1,\dots,C_k\}$ generates an ordered partition\footnote{Observe that the converse statement is true as well: Any ordered partition with nonempty blocks is associated with a unique coalitional chain.} $(C_1,C_2\setminus C_1,\dots,C_k\setminus C_{k-1})$, one can ask if there is any relation between the core of coalition structures and the intermediate set. The closer look reveals fundamental differences, however. Namely the payoff vectors $\bx$ associated with games on coalition structures usually satisfy Pareto optimality locally, that is, $\bx(B_i)=v(B_i)$ for each block $B_i$ of the partition. This is certainly not the case of a~payoff $\bx\in\calM_\calH(v)$ since \eqref{Label Bv 1} means that the coalition $C_i\setminus C_{i-1}$ allocates the worth $v(C_i)-v(C_{i-1})$ among its members. Another point of dissimilarity is that the condition $\bx(A)\geq v(A)$ with $A\subseteq N$ applies across all the blocks of partition in the core of a game with a coalition structure $\{B_1,\dotsc,B_m\}$, while the condition \eqref{Label Bv 2} is used only for the sub-coalitions $B$ of each block $C_i\setminus C_{i-1}$.}
\item[\textbf{Equal Split-Off Set (ESOS)}]{This solution concept is based on ordered partitions and may attain non-convex values; see \cite[Section 4.2]{Tijs05}. It follows from Example \ref{Example gloves simple} that $\Mcalv$ is not contained in the ESOS of $v$. Moreover, the additive game from Example 4.2(iv) in \cite{Tijs05} shows that ESOS is not a~part of $\calM$ either.}
\item[\textbf{Equal Division Core (EDC)}]{The solution EDC is another non-convex solution concept, which was introduced by Selten in \cite{Selten72} and consists of ``efficient payoff vectors for the grand coalition which cannot be improved upon by the equal division allocation of any subcoalition''. Using Example~\ref{Example gloves simple} we can show that the EDC of $v$ does not contain and is not contained in $\Mcalv$. Indeed, the EDC of this game coincides with the set $$\{\bx\in\calI(v)\mid \text{$x_1\geq \tfrac 12$ or ($x_2\geq \tfrac 12$ and $x_3\geq \tfrac 12$)}\}.$$}
\item[\textbf{Core Cover (CC)}]{This solution was studied by Tijs and Lipperts \cite{TijsLipperts82}. Example~\ref{Example gloves simple} yields that CC of the glove game coincides with the core and thus it is strictly smaller than the corresponding intermediate set. The converse strict inclusion is rendered by \cite[Example 1]{TijsLipperts82}.}
\item[\textbf{Reasonable Set (RS)}]{See \cite{VaretZamir87} for details. Since the intermediate set has the property RE from Definition \ref{Def games properties}, it holds true that $\Mcalv$ is included in $\textrm{RS}(v)$ whenever $v\in\Gamma^*(N)$.}
\item[\textbf{Dominance Core (DC)}]{The solution DC is defined as the set of all undominated imputations. If $v \in\Gamma^*(N)$ and $\mathrm{DC}(v)\neq\emptyset$, then \cite[Theorem 2.13]{Tijs05} yields $\calC(v)=\textrm{DC}(v)$, which means that $\Mcalv$ contains $\mathrm{DC}(v)$.}
\end{description}
\noindent
In summary, the only remarkable relations are rendered by the last two items: for every game $v\in\Gamma^*(N)$, we have
\(
\mathrm{DC}(v)\subseteq \Mcalv\subseteq \textrm{RS}(v).
\)

\section{Examples}\label{sec:ex}

In this section we simplify the formula \eqref{eqThm1} from Theorem~\ref{Theorem Bv} for two families of games.

\subsection{Simple games}\label{Subsection simple}

We will compute the intermediate set for the class of simple games. The result will be compared with the formula for the core of simple games.  A~game $v\in\Gamma(N)$ is \emph{monotone} if $v(A)\leq v(B)$ whenever $A\subseteq B\subseteq N$ and $v$ is called \emph{simple} if it is monotone, $v(A)\in\{0,1\}$ for each $A\subseteq N$, and $v(N)=1$. Every simple game $v$ over the player set $N$ can be identified with the family $\calV$ of \emph{winning coalitions}  in $v$ as follows:
\[
\calV=\{A\subseteq N\mid v(A)=1\}.
\]
Conversely, any system of coalitions $\calV$ such that $N\in \calV$, $\emptyset\notin \calV$ and
$$
A\subseteq B\subseteq N,\ A\in \calV\Rightarrow B\in \calV,
$$
gives rise to a simple game $v$ by putting $v(A)=1$ if $A\in\calV$ and $v(A)=0$, otherwise. The family of \emph{minimal winning coalitions} in $v$ is 
\[
\calV^m = \{E\in \calV\mid  B\subsetneq E\Rightarrow B\notin \calV, \text{ for every $B\subseteq N$}\}.
\]

Based on the concept of minimal winning coalitions, we will prove that $\Mcalv$ is a union of faces of the standard $(n-1)$-dimensional simplex, where each face corresponds to one minimal winning coalition. For any nonempty coalition $E\subseteq N$, we put
\[
\Delta_E:=\left\{\bx\in\R^n\left|\ \begin{array}{ll}x_i = 0 & \text{if }i\in N\setminus E\\x_i \ge 0 & \text{if }i\in E\\ \bx(E) = 1\end{array}\right.\right\}.
\]

\begin{theorem}\label{Theorem simple Bv}
If $v\in\Gamma(N)$ is a simple game, then
\begin{subequations}\label{Label simple Xv}
\begin{align}
\label{Label simple Cv} \Ccalv &= \bigcap_{E\in \calV^m}\Delta_E, \\
\label{Label simple Bv} \Mcalv &= \bigcup_{E\in \calV^m}\Delta_E.
\end{align}
\end{subequations}
\end{theorem}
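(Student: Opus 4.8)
The plan is to treat the two formulas separately, proving \eqref{Label simple Cv} by a direct argument and \eqref{Label simple Bv} through the chain characterization of Theorem~\ref{Theorem Bv}. The recurring observation that drives everything is that, since $v$ is monotone and $\{0,1\}$-valued, a payoff vector in any of these sets is forced to be nonnegative and to vanish outside a winning coalition; this is what links an abstract membership condition to a face $\Delta_E$.

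For the core formula I would first show $\Ccalv\subseteq\bigcap_{E\in\calV^m}\Delta_E$. Take $\bx\in\Ccalv$. Applying $\bx(A)\ge v(A)$ to singletons gives $\bx\ge 0$, and $\bx(N)=v(N)=1$. For any minimal winning coalition $E$ we have $\bx(E)\ge v(E)=1=\bx(N)$, so nonnegativity forces $\bx(N\setminus E)=0$, i.e. $x_i=0$ for $i\notin E$ while $\bx(E)=1$; hence $\bx\in\Delta_E$. Conversely, if $\bx$ lies in the intersection then, fixing any $E\in\calV^m$ (which is nonempty since $N$ is winning), $\bx\in\Delta_E$ yields $\bx\ge 0$ and $\bx(N)=\bx(E)=1$. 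For a losing $A$ the inequality $\bx(A)\ge 0=v(A)$ is immediate, while a winning $A$ contains some $E_0\in\calV^m$, and $\bx\in\Delta_{E_0}$ makes $\bx$ supported on $E_0\subseteq A$, so $\bx(A)=\bx(E_0)=1=v(A)$. Thus $\bx\in\Ccalv$.

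For the intermediate set I would use $\Mcalv=\bigcup_{\calH\in\scrC}\calM_\calH(v)$. The inclusion $\supseteq$ is the easy half: for each $E\in\calV^m$ I claim the two-element chain $\calH=\{E,N\}$ satisfies $\calM_\calH(v)=\Delta_E$. Indeed \eqref{Label Bv 1} reads $\bx(E)=v(E)=1$ and $\bx(N\setminus E)=v(N)-v(E)=0$, while \eqref{Label Bv 2} applied to $B\subseteq E$ gives $\bx(B)\ge v(B)$ (which is $\bx(B)\ge 0$ because $E$ is minimal winning) and applied to $B\subseteq N\setminus E$ gives $\bx(B)\ge 0$; together with $\bx(N\setminus E)=0$ this pins down exactly $\Delta_E$. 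Hence $\Delta_E\subseteq\Mcalv$.

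The main work is the inclusion $\subseteq$, i.e. showing $\calM_\calH(v)\subseteq\bigcup_{E\in\calV^m}\Delta_E$ for every chain $\calH=\{C_1,\dots,C_k\}$. Here I would exploit that monotonicity and the $\{0,1\}$-range force the sequence $v(C_0)=0\le v(C_1)\le\dots\le v(C_k)=1$ to have a single jump: there is a unique index $j$ with $v(C_{j-1})=0$ and $v(C_j)=1$. Writing $D:=C_j\setminus C_{j-1}$, the equalities \eqref{Label Bv 1} become $\bx(C_i\setminus C_{i-1})=0$ for $i\ne j$ and $\bx(D)=1$; combined with \eqref{Label Bv 2} on singletons this forces $\bx\ge 0$ and $x_m=0$ for every $m\notin D$. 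Now pick any minimal winning coalition $E\subseteq C_j$ (one exists because $C_j$ is winning) and set $B_E:=E\cap D$. Since $C_{j-1}\cup B_E\supseteq E$ is winning, \eqref{Label Bv 2} gives $\bx(B_E)\ge v(C_{j-1}\cup B_E)-v(C_{j-1})=1$; as $\bx\ge 0$ and $\bx(D)=1$, this yields $\bx(B_E)=1$ and $\bx(D\setminus B_E)=0$. Consequently $\bx$ is supported on $B_E\subseteq E$, which is precisely the statement $\bx\in\Delta_E$. The only genuine obstacle is this last step, namely correctly locating the face $\Delta_E$ that captures a given $\bx\in\calM_\calH(v)$; the ``single jump'' structure is what makes it tractable, since it collapses the whole chain to the single block $D$ on which all the allocation takes place.
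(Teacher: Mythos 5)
Your proposal is correct and follows essentially the same route as the paper: the inclusion $\bigcup_{E\in\calV^m}\Delta_E\subseteq\Mcalv$ via the two-element chain $\{E,N\}$, and the reverse inclusion by locating the unique jump $v(C_{l-1})=0$, $v(C_l)=1$ in an arbitrary chain and applying \eqref{Label Bv 2} to $E\cap(C_l\setminus C_{l-1})$ for a minimal winning $E\subseteq C_l$. The only difference is that you prove the core formula \eqref{Label simple Cv} in full, whereas the paper cites it as known; your direct argument for it is also correct.
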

\begin{proof}
The formula for core on simple games  \eqref{Label simple Cv} can be derived easily; see \cite[Example~X.4.6]{Owen95}, for instance. If $\bx\in \bigcup_{E\in \calV^m}\Delta_E$, then there is some $E\in \calV^m$ such that $\bx\in\Delta_E$. Since $E$ is a minimal winning coalition, it is straightforward to show that the chain $\{E,N\}$ and $\bx$ satisfy relations \eqref{Label Bv 1}--\eqref{Label Bv 2}, which implies $\bx\in \Mcalv$.

Conversely, assume that $\bx\in \Mcalv$. By Theorem \ref{Theorem Bv} there is a chain $\calH=\{C_1,\dots,C_k\}$ such that $\bx\in\calM_{\calH}(v)$. Let $l$ be the smallest integer satisfying $v(C_l)=1$. Consider now any $E\in\calV^m$ with $E\subseteq C_l$. From \eqref{Label Bv 2} with $i=l$ we see that $\bx(E\cap (C_l\setminus C_{l-1}))\ge 1$, which gives $\bx(E\cap (C_l\setminus C_{l-1}))=1$ by \eqref{Label Bv 1}. Since $\bx(N)=1$, we have 
$\bx(N\setminus (E\cap (C_l\setminus C_{l-1})))=0$. But since $x_i\ge 0$ for all $i\in N$, this implies that $\bx(N\setminus E)=0$, which finishes the proof.
\qed\end{proof}

Assume now that a simple game $v$ is \emph{zero-normalized}, that is, $v(\{i\})=0$ for every player $i\in N$. Then the formula \eqref{Label simple Bv} further simplifies as 
\begin{equation*}%\label{Label simple Mv case}
\Mcalv = \bigcup_{E\in \calV^m}\left\{\bx\in\calI(v) \mid  x_i = 0  \text{ for every $i\in N\setminus E$} \right\},
\end{equation*}
where $\calI(v)$ is the set of all imputations \eqref{imputations}.
Thus, we have $\Mcalv\subseteq \calI(v)$ in this case. Note that the last inclusion also follows from Lemma \ref{Lemma Gamma star} by zero-monotonicity of $v$. The following example shows that $ \calI(v)\subsetneq \Mcalv$ for a particular simple game that is not zero-monotonic.

\begin{example}
Let $n=3$ and 
$$
v(A) = \begin{cases} 1 &\text{if $A\supseteq \{1\}$ or $A\supseteq\{2,3\}$}, \\ 0 &\text{otherwise,}\end{cases} 
\qquad \text{for each $A\subseteq \{1,2,3\}$.}
$$
It is easy to see that
\begin{align*}
\Ccalv &= \emptyset, \\
\calI(v) &=\{ (1,0,0)\}, \\
\Mcalv &= \{(1,0,0)\}\cup\conv\{(0,1,0), (0,0,1)\}.
\end{align*}
Note that this example shows that $\Mcalv$ need not be a connected set in $\dR^n$.
\exampleEnd\end{example}

\begin{remark}\label{rem:question}
Formulas \eqref{Label simple Cv}--\eqref{Label simple Bv} are also interesting from the perspective of variational analysis. On the one hand, the limiting superdifferential is a~union of the Fr\'echet superdifferentials with respect to a suitable neighborhood (Definition \ref{Definition superdifferentials}). On the other hand, the previous theorem states that in a special case the Fr\'echet superdifferential can be written as an intersection of the limiting superdifferentials. This is a relation which does not hold true in general.
\exampleEnd\end{remark}

\subsection{Glove game}\label{Subsection glove}

In the previous subsection we have managed to compute $\Mcalv$ for the class of simple games. In this subsection we will perform the same task for the glove game, which belongs to the class of assignment games \cite{ShapleyShubik71}. In the glove game, there are $n=p+q$ players and each of them has a glove: either a left one or a~right one. When a subset of players forms a coalition, then their joint profit is the number of glove pairs owned together. Specifically, assume that $L$ is the set of all players having the left glove and $R$ is the set of all players having the right glove. Then
$$
v(A) = \min\{\nrm{A\cap L}, \nrm{A\cap R}\}.
$$
We always assume that $L=\{1,\dots,p\}$, $R=\{p+1,\dots,p+q\}$ and  $p\ge q$, without loss of generality.

The shape of core of glove game is known since it is just a special case of an assignment game; see \cite[Section 3.3]{ShapleyShubik71}. Nevertheless, in order to compare the two solution concepts, we will state the known formula for $\Ccalv$. 

\begin{proposition}\label{Proposition C p >= q}
If $p>q$, then $\Ccalv$ consists of a single point $\bx$ whose coordinates are: $x_l=0$ for all $l\in L$ and $x_r=1$ for all $r\in R$. If $p=q$, then $\Ccalv=\conv \{\chi_L,\chi_R\}$.
\end{proposition}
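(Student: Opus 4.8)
The plan is to reduce the exponential list of core inequalities to a few essential ones and then finish with a short averaging argument. Because $p \ge q$, we have $v(N) = q$, so any $\bx \in \calC(v)$ satisfies the efficiency identity $\bx(N) = q$ together with $x_i \ge v(\{i\}) = 0$ for each $i$. My first step is to show that, beyond efficiency and these nonnegativity constraints, the only core inequalities that bind are the mixed-pair inequalities $x_l + x_r \ge 1$ for $l \in L$, $r \in R$, each arising from $v(\{l,r\}) = 1$. To see they suffice, take any coalition $A$ with $s$ left and $t$ right members, so that $v(A) = \min\{s,t\}$; pairing $\min\{s,t\}$ of its left members with the same number of its right members, adding the corresponding mixed-pair inequalities, and discarding the remaining coordinates (which are $\ge 0$) gives $\bx(A) \ge \min\{s,t\} = v(A)$. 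Hence $\calC(v)$ is exactly the polytope cut out by $x_i \ge 0$, $\bx(N) = q$, and $x_l + x_r \ge 1$ for all mixed pairs.

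Next I set $\alpha := \min_{l \in L} x_l$ and $\beta := \min_{r \in R} x_r$. Applying the mixed-pair inequality to a pair realizing both minima yields $\alpha + \beta \ge 1$, while summing $x_l \ge \alpha$ over $L$ and $x_r \ge \beta$ over $R$ and invoking efficiency gives $q = \bx(N) \ge p\alpha + q\beta$. These two inequalities drive both cases.

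If $p > q$, I substitute $\beta \ge 1 - \alpha$ into the second inequality to get $(p - q)\alpha \le 0$, which forces $\alpha = 0$ since $\alpha \ge 0$ and $p - q > 0$; then $\beta \ge 1$, so $\sum_{r \in R} x_r \ge q$, and combined with $\sum_{l \in L} x_l \ge 0$ and $\bx(N) = q$ this pins down $x_l = 0$ for all $l$ and $x_r = 1$ for all $r$. If instead $p = q$, the second inequality reads $q \ge q(\alpha + \beta) \ge q$, so all inequalities are tight: $x_l = \alpha$ for every $l$, $x_r = \beta$ for every $r$, and $\alpha + \beta = 1$ with $\alpha \in [0,1]$, i.e. $\bx = \alpha \chi_L + (1 - \alpha)\chi_R$. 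In both cases a direct substitution shows the resulting vectors satisfy the reduced constraints, so they genuinely belong to $\calC(v)$; this yields the singleton when $p > q$ and $\conv\{\chi_L, \chi_R\}$ when $p = q$.

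The step I expect to be most delicate is the reduction in the first paragraph: the whole argument hinges on verifying that no coalition imposes a constraint stronger than the mixed-pair inequalities, which is precisely what makes the later averaging both legitimate and exactly tight. Once that is in place, the remainder is elementary bookkeeping with the efficiency identity.
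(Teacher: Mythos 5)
Your argument is correct and complete. The paper itself offers no proof of this proposition: it simply records the statement as a known fact, citing the description of cores of assignment games in Shapley and Shubik (Section 3.3 of that paper), of which the glove game is a special case. Your route is therefore genuinely different in that it is self-contained and elementary: you first verify that the full system of core inequalities collapses to efficiency, nonnegativity, and the mixed-pair constraints $x_l+x_r\ge 1$ (the pairing-plus-nonnegativity argument for a general coalition $A$ with $s$ left and $t$ right members is exactly what is needed, and the pairs you sum over are disjoint, so no coordinate is double-counted), and then the averaging step with $\alpha=\min_{l\in L}x_l$ and $\beta=\min_{r\in R}x_r$ cleanly separates the two cases: $(p-q)\alpha\le 0$ forces $\alpha=0$ when $p>q$, and forces all inequalities tight when $p=q$, giving the segment $\conv\{\chi_L,\chi_R\}$. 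The final sanity check that the candidate vectors do satisfy the reduced constraints closes both directions. What the citation buys the paper is brevity and placement of the glove game within the broader assignment-game theory; what your proof buys is a short, verifiable derivation that uses nothing beyond the definition of the core and would let the reader check the proposition without consulting the assignment-game literature. No gaps.
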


We will provide a simple way of determining the solution of \eqref{Label Bv 1}--\eqref{Label Bv 2}.

\begin{lemma}\label{Lemma B p q any}
Let  $(C_1,\dotsc,C_k)$ be a chain. Given $i=1,\dots,k$, let $p_i$ and $q_i$ be the number of left and right gloves, respectively, owned by $C_i$. Set $p_0=q_0:=0$.
\begin{itemize}
\item If $p_{i-1}=q_{i-1}$ and $p_i=q_i$, then the solution set of system \eqref{Label Bv 1}--\eqref{Label Bv 2} is $$\conv \{\chi_{L\cap (C_i\setminus C_{i-1})},\chi_{R\cap (C_i\setminus C_{i-1})}\}.$$
\item If $p_{i-1}>q_{i-1}$ and $p_i<q_i$, then system \eqref{Label Bv 1}--\eqref{Label Bv 2} does not have a feasible solution.
\item If $p_{i-1}\ge q_{i-1}$ and $p_i\ge q_i$ and at least one inequality is strict, then $\bx$ is a solution to system \eqref{Label Bv 1}--\eqref{Label Bv 2} if and only if $x_l=0$ for all $l\in (C_i\setminus C_{i-1})\cap L$ and $x_r=1$ for all $r\in (C_i\setminus C_{i-1})\cap R$.
\item If $p_{i-1}<q_{i-1}$ and $p_i>q_i$, then system \eqref{Label Bv 1}--\eqref{Label Bv 2} does not have a feasible solution.
\item If $p_{i-1}\le q_{i-1}$ and $p_i\le q_i$ and at least one inequality is strict, then $\bx$ is a solution to system \eqref{Label Bv 1}--\eqref{Label Bv 2} if and only if $x_l=1$ for all $l\in (C_i\setminus C_{i-1})\cap L$ and $x_r=0$ for all $r\in (C_i\setminus C_{i-1})\cap R$.
\end{itemize}
\end{lemma}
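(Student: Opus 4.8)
The plan is to reduce the entire lemma to determining the core of a single glove game. Fix the index $i$ and write $D_i:=C_i\setminus C_{i-1}$. The system \eqref{Label Bv 1}--\eqref{Label Bv 2} for this $i$ constrains only the coordinates $x_j$ with $j\in D_i$ and is, by definition, precisely the core $\Ccal(v_i^\calH)$ of the marginal game $v_i^\calH(B)=v(C_{i-1}\cup B)-v(C_{i-1})$, $B\subseteq D_i$ (cf.\ Definition \ref{def marg game} and Lemma \ref{lemMarg}). First I would compute $v_i^\calH$ explicitly: writing $a=|B\cap L|$ and $b=|B\cap R|$, disjointness of $B$ from $C_{i-1}$ gives $v(C_{i-1}\cup B)=\min\{p_{i-1}+a,\,q_{i-1}+b\}$, so that for $p_{i-1}\ge q_{i-1}$ one gets $v_i^\calH(B)=\min\{(p_{i-1}-q_{i-1})+a,\,b\}$ and, symmetrically, $v_i^\calH(B)=\min\{a,\,(q_{i-1}-p_{i-1})+b\}$ for $p_{i-1}\le q_{i-1}$. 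Since $v(A)=\min\{|A\cap L|,|A\cap R|\}$ is symmetric under interchanging $L$ and $R$ (which swaps each $p_j$ with $q_j$), the fourth and fifth cases reduce to the second and third, so I treat only $p_{i-1}\ge q_{i-1}$. Put $s:=p_{i-1}-q_{i-1}\ge 0$, $\alpha:=|D_i\cap L|$ and $\beta:=|D_i\cap R|$, so that $v_i^\calH(B)=\min\{s+a,b\}$ and the efficiency level is $v_i^\calH(D_i)=\min\{s+\alpha,\beta\}$.

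In the first case $p_{i-1}=q_{i-1}$ gives $s=0$, so $v_i^\calH(B)=\min\{a,b\}$ is the glove game on $D_i$ itself; as $p_i=q_i$ forces $\alpha=\beta$, this glove game is balanced, and Proposition \ref{Proposition C p >= q} yields $\Ccal(v_i^\calH)=\conv\{\chi_{L\cap D_i},\chi_{R\cap D_i}\}$, which is the asserted solution set. For the infeasibility in the second case ($p_{i-1}>q_{i-1}$, $p_i<q_i$), I would use a counting estimate. Here $s\ge 1$, so each right-glove singleton forces $x_r\ge v_i^\calH(\{r\})=\min\{s,1\}=1$, while left-glove singletons only give $x_l\ge 0$; summing over $D_i$ yields $\bx(D_i)\ge\beta$. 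But $p_i<q_i$ means $s+\alpha<\beta$, so efficiency requires $\bx(D_i)=v_i^\calH(D_i)=s+\alpha<\beta$, a contradiction, and the system is infeasible. As noted, the fourth and fifth cases now follow by the $L\leftrightarrow R$ symmetry.

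The third case is where I expect the real work, since it demands identifying a unique vector and the argument splits according to which inequality is strict. Feasibility of the candidate $\bx^\ast$ ($x_l=0$ for $l\in D_i\cap L$, $x_r=1$ for $r\in D_i\cap R$) is immediate, as $\bx^\ast(B)=b\ge\min\{s+a,b\}$ and $\bx^\ast(D_i)=\beta=v_i^\calH(D_i)$ because $s+\alpha\ge\beta$. For uniqueness I would first record two bounds holding throughout the case: $x_l\ge 0$ from left singletons, and $x_r\le 1$ because the coalition $D_i\setminus\{r\}$ has value $\min\{s+\alpha,\beta-1\}=\beta-1$, so that $x_r=\beta-\bx(D_i\setminus\{r\})\le 1$. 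Then I would split on the strict inequality. If $p_i>q_i$, then $s+\alpha-1\ge\beta$, so $D_i\setminus\{l\}$ has value $\beta$ and gives $x_l\le 0$; hence $x_l=0$ for all left players, and efficiency forces every $x_r=1$. If instead $p_i=q_i$, then $s\ge 1$, so the right singleton gives $x_r\ge\min\{s,1\}=1$, hence $x_r=1$ for all right players, and efficiency forces every $x_l=0$. Either way $\bx=\bx^\ast$, which establishes the third case and, together with the symmetry reduction, the whole lemma. The main obstacle is precisely this bifurcation: the single-coalition bounds that pin down one side depend on which of the two inequalities is strict, so no single test coalition settles uniqueness uniformly.
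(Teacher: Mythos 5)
Your proof is correct and follows essentially the same route as the paper's: both arguments test the block-$i$ system \eqref{Label Bv 1}--\eqref{Label Bv 2} against singletons and complements of singletons, combine the resulting bounds with the efficiency equality, split the third statement according to which inequality is strict, and dispose of the last two cases by the $L\leftrightarrow R$ symmetry. The only divergences are cosmetic and harmless: you package the system as the core of the shifted glove game $v_i^\calH(B)=\min\{s+a,b\}$, which lets you settle the balanced first case by citing Proposition \ref{Proposition C p >= q} rather than reproducing the paper's direct pairing argument $\bx(\{l,r\})\ge 1$.
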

\begin{proof}
For this proof, it is more advantageous to work with ordered partitions than with chain, thus we define $B_i:=C_{i}\setminus C_{i-1}$. For the first statement, we realize that $\chi_{L\cap B_i}$ and $\chi_{R\cap B_i}$ solve system \eqref{Label Bv 1}--\eqref{Label Bv 2}. Conversely, denote $\bx$ to be any solution of this system. Then we obtain $\bx(B_i)=p_i-p_{i-1}$ and $\bx(\{l,r\})\ge 1$ for all $l\in L\cap B_i$ and $r\in R\cap B_i$. But summing all these terms results in $\bx(\{l,r\})=1$, which further implies $x_{l_1}=x_{l_2}$ for all $l_1,l_2\in L\cap B_i$ and $x_{r_1}=x_{r_2}$ for all $r_1,r_2\in R\cap B_i$. But this implies the first statement.

Concerning the remaining four statements, we will proof only two of them since the proof of the last two assertions is completely analogous. Assume first that $p_{i-1}=q_{i-1}$ and $p_i>q_i$. Then obviously any $\bx$ with $x_l=0$ for all $l\in B_i\cap L$ and $x_r=1$ for all $r\in B_i\cap R$ satisfies system \eqref{Label Bv 1}--\eqref{Label Bv 2}. Consider now any solution of this system. From \eqref{Label Bv 1} we see that $\bx(B_i)=q_i-q_{i-1}$ and from \eqref{Label Bv 2} we have $\bx(B_i\setminus\{l\})\ge q_i-q_{i-1}$, which together with the nonnegativity of $x_l$ implies $x_l=0$ for all $l\in B_i\cap L$. But this implies one part of the third statement.

Assume that $p_{i-1}>q_{i-1}$ and consider any solution $\bx$ of \eqref{Label Bv 1}--\eqref{Label Bv 2}. Then we have
\begin{equation}\label{Label Bv part 1}
\bx(B_i) = \min\{p_i,q_i\} - \min\{p_{i-1},q_{i-1}\} = \min\{p_i,q_i\} - q_{i-1}.
\end{equation}
Taking $B=\{r\}$ for any $r\in B_i\cap R$ results in $x_r\ge 1$. Similarly, by taking $B=\{l\}$ for $l\in B_i\cap L$ we get $x_l\ge 0$. This results in
\begin{equation}\label{Label Bv part 2}
\bx(B_i) = \bx(B_i\cap L) + \bx(B_i\cap R)\ge 0 + (q_i - q_{i-1}) = q_i - q_{i-1}.
\end{equation}
Combining formulas \eqref{Label Bv part 1} and \eqref{Label Bv part 2} leads to
\begin{equation}\label{Label Bv part 3}
q_i \le \min\{p_i,q_i\}.
\end{equation}

If $p_i<q_i$, then formula \eqref{Label Bv part 3} cannot be satisfied and thus, system \eqref{Label Bv 1}--\eqref{Label Bv 2} does not have any feasible solutions. On the other hand, if $p_i\ge q_i$, then from \eqref{Label Bv part 1} we see that $\bx(B_i)=q_i-q_{i-1}$, and \eqref{Label Bv part 2} further implies that $\bx(B_i\cap L) = 0$ and $\bx(B_i\cap R) = q_i-q_{i-1}$. But this means that $x_r=1$ for all $r\in B_i\cap R$ and one inclusion has been proved.

To finish the proof, we must show that for $p_i\ge q_i$ and for $\bx$ with $x_l=0$ for all $l\in B_i\cap L$ and $x_r=1$ for all $r\in B_i\cap R$, the payoff vector $\bx$ solves \eqref{Label Bv 1}--\eqref{Label Bv 2}. Then 
$$
\aligned
\bx(B_i) &= q_i-q_{i-1} = \min\{p_i,q_i\} - \min\{p_{i-1},q_{i-1}\} = v(C_i)-v(C_{i-1}).
\endaligned
$$
Consider any $B\subseteq C_i\setminus C_{i-1}=B_i$ and assume that $B$ contains $a$ players with left gloves and $b$ players with right gloves. Then
$$
\aligned
\bx(B)&=b\ge\min \{a+p_{i-1}-q_{i-1},b\} = \min\{a+p_{i-1},b+q_{i-1}\} - q_{i-1}\\
&=\min\{a+p_{i-1},b+q_{i-1}\} - \min\{p_{i-1},q_{i-1}\} \\
&= v(C_{i-1}\cup B)-v(C_{i-1}),
\endaligned
$$
which concludes the proof.\qed
\end{proof}

We will prove the main theorem of this section. It says that every $\bx\in \Mcalv$ can be generated  via Theorem \ref{Theorem Bv} by choosing coalitions $B_1,\dots,B_{q+1}$ such that: (i) $B_1,\dots,B_{q}$ are $2$-player coalitions containing a pair of players each of which owns one right and one left glove, respectively, (ii) the coalition $B_{q+1}$ contains only the players possessing left gloves or $B_{q+1}=\emptyset$ if $p=q$, that is, $B_{q+1}\subseteq L$.

\begin{theorem}\label{Theorem gloves}
	Assume thast $p\ge q$. Then $\bx\in \Mcalv$ if and only if there exists $\tilde{L}\subseteq L$ with $\abs{\tilde{L}}=q$ and a~bijection $\rho\colon\tilde{L}\to R$ such that the following conditions are satisfied:
\begin{subequations}\label{Label Bv gloves}
\begin{align}
\label{Label Bv gloves 1}x_l + x_{\rho(l)} &= 1\text{ for all }l\in\tilde{L},\\
\label{Label Bv gloves 2}1\ge x_l&\ge 0\text{ for all }l\in \tilde{L},\\
\label{Label Bv gloves 3}x_l&=0\text{ for all }l\in L\setminus \tilde{L}.
\end{align}
\end{subequations}
\end{theorem}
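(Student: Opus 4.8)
The plan is to combine the chain characterization of Theorem~\ref{Theorem Bv} with the block-by-block solution of the system \eqref{Label Bv 1}--\eqref{Label Bv 2} furnished by Lemma~\ref{Lemma B p q any}. By Theorem~\ref{Theorem Bv}, $\bx\in\Mcalv$ if and only if $\bx\in\calM_{\calH}(v)$ for some chain $\calH=(C_1,\dots,C_k)$. Since each constraint in \eqref{Label Bv 1}--\eqref{Label Bv 2} involves only the coordinates of $\bx$ lying in a single block $B_i:=C_i\setminus C_{i-1}$, the system decouples across blocks, so $\calM_{\calH}(v)$ is the Cartesian product of the block-local solution sets described by Lemma~\ref{Lemma B p q any}. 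I also record at the outset that every marginal vector of the glove game has $0/1$ entries, whence $\Mcalv\subseteq\Wcalv\subseteq[0,1]^n$ and in particular $x_i\in[0,1]$ for all $i$.

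For sufficiency, given $\tilde{L}$ and $\rho$ satisfying \eqref{Label Bv gloves 1}--\eqref{Label Bv gloves 3}, I would exhibit an explicit chain realizing $\bx$. Order $\tilde{L}=\{l_1,\dots,l_q\}$ arbitrarily and take the chain whose successive blocks are the pairs $\{l_1,\rho(l_1)\},\dots,\{l_q,\rho(l_q)\}$, followed by the block $L\setminus\tilde{L}$ (omitted when $p=q$). Each pair block has $p_{i-1}=q_{i-1}$ and $p_i=q_i$, so by the first bullet of Lemma~\ref{Lemma B p q any} its local solution set is $\conv\{\chi_{\{l_j\}},\chi_{\{\rho(l_j)\}}\}$, and \eqref{Label Bv gloves 1}--\eqref{Label Bv gloves 2} say precisely that $(x_{l_j},x_{\rho(l_j)})$ lies there. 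The terminal block $L\setminus\tilde{L}$ has $p_q=q_q=q$ and $p_{q+1}=p>q=q_{q+1}$, so the third bullet forces $x_l=0$ on it, matching \eqref{Label Bv gloves 3}. Hence $\bx\in\calM_{\calH}(v)\subseteq\Mcalv$.

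For necessity, suppose $\bx\in\calM_{\calH}(v)$ for a feasible chain. Lemma~\ref{Lemma B p q any} classifies each block as \emph{balanced} ($p_{i-1}=q_{i-1}$, $p_i=q_i$: all left coordinates equal some $\lambda_i\in[0,1]$, all right equal $1-\lambda_i$, with $\abs{L\cap B_i}=\abs{R\cap B_i}$), \emph{left-heavy} ($x_l=0$, $x_r=1$ on the block), or \emph{right-heavy} ($x_l=1$, $x_r=0$); the two crossing cases are infeasible and thus excluded. Writing $d_i:=p_i-q_i$ with $d_0=0$ and $d_k=p-q\ge 0$, the block type is governed by the sign of $d$ at its two endpoints, and feasibility forbids a strict sign change inside a block, so $d$ may switch sign only where $d_i=0$. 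Letting $a_3,b_3$ (resp. $a_5,b_5$) be the total numbers of left and right players in left-heavy (resp. right-heavy) blocks, the telescoping identities $a_3-b_3=\sum_{\text{left-heavy}}(d_i-d_{i-1})$ and $a_5-b_5=\sum_{\text{right-heavy}}(d_i-d_{i-1})$ hold. I then analyse the excursions of the path $d_0,\dots,d_k$: since $d$ ends at $p-q\ge 0$, every maximal excursion below zero returns to zero and contributes net change $0$, giving $a_5=b_5$, whereas among the excursions above zero only the terminal one need not close up, contributing exactly $p-q$, giving $a_3-b_3=p-q$.

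Finally I would assemble $\rho$ from these balances. Match each balanced block internally (left coordinate $\lambda_i$ to right coordinate $1-\lambda_i$, sum $1$, possible since the two counts agree); match the $b_5$ right-heavy right players bijectively to the $a_5=b_5$ right-heavy left players (pairs $0+1=1$); and match the $b_3$ left-heavy right players injectively into the $a_3\ge b_3$ left-heavy left players (pairs $1+0=1$), leaving exactly $a_3-b_3=p-q$ left players unmatched, all with $x_l=0$. The matched left players, one for each of the $q$ right players, form $\tilde{L}$ with $\abs{\tilde{L}}=q$; the matching defines $\rho\colon\tilde{L}\to R$, relations \eqref{Label Bv gloves 1}--\eqref{Label Bv gloves 2} hold by construction together with $x_i\in[0,1]$, and \eqref{Label Bv gloves 3} records the unmatched left players. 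The delicate point, and the step I expect to be the main obstacle, is the excursion bookkeeping yielding $a_3\ge b_3$ and $a_5=b_5$: per block these relations can fail, and it is only the global structure of the balance path—ending at $p-q\ge 0$ and never crossing zero within a block—that makes the counts come out right.
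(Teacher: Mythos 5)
Your proof is correct and rests on the same two pillars as the paper's: Theorem \ref{Theorem Bv} to reduce membership in $\Mcalv$ to feasibility of \eqref{Label Bv 1}--\eqref{Label Bv 2} along some chain, and Lemma \ref{Lemma B p q any} to solve that system block by block; your sufficiency direction (pair blocks $\{l_j,\rho(l_j)\}$ followed by $L\setminus\tilde{L}$) is identical to the paper's. Where you genuinely diverge is in the necessity direction. The paper walks along the chain inductively, building partial bijections case by case on the sign of $p_i-q_i$ and closing with the somewhat informal step ``applying this procedure multiple times, we have managed to find an index $i$\dots''. You instead classify all blocks at once (balanced, left-heavy, right-heavy, the crossing cases being excluded by feasibility), introduce the balance path $d_i=p_i-q_i$, and extract the global identities $a_5=b_5$ and $a_3-b_3=p-q$ from its excursion structure: every maximal run of right-heavy blocks starts and ends at $d=0$ (a sign change cannot occur inside a feasible block, and $d_k=p-q\ge0$ forces the last such run to close), so its net left-minus-right count telescopes to zero, and the left-heavy runs absorb the remaining $p-q$. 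The matching you then assemble does yield a bijection $\rho\colon\tilde{L}\to R$ with $\abs{\tilde{L}}=q$ satisfying \eqref{Label Bv gloves 1}--\eqref{Label Bv gloves 3}; note your pairs may straddle different blocks, which is harmless since the conditions are not block-internal. I checked both counting claims and they hold, so the ``delicate point'' you flag is handled correctly. Your version makes explicit the bookkeeping the paper's induction leaves implicit, at the cost of introducing the tallies $a_3,b_3,a_5,b_5$ that the paper's sequential construction avoids.
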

\begin{proof}
Let $\bx$ satisfy \eqref{Label Bv gloves 1}--\eqref{Label Bv gloves 3}. We can enumerate the elements of $\tilde{L}$ as $l_1,\dots,l_q$ and define the coalitions
$$
B_1 = \{l_1,\rho(l_1)\}, \dots, B_q = \{l_q,\rho(l_q)\}, B_{q+1} = L\setminus \tilde{L}
$$
and chain $\calH=(C_1,\dots,C_{q+1})$ with $C_i=B_1\cup\dots \cup B_i$ for $i=1,\dots,q+1$. Then it is easy to verify that $\bx\in \Mcalv$ by Theorem \ref{Theorem Bv} and using the chain above.

For the proof of the second inclusion, denote by $p_i$ the number of left gloves owned by players $C_i$ and by $q_i$ the number of right gloves owned by the same players. Put $p_0=q_0:=0$. To prove the statement, we will construct~$\rho$ by a variant of finite induction. There are three possibilities: $p_1=q_1$, $p_1<q_1$ or $p_1>q_1$.

If $p_1=q_1$, then define two sets $L_1:=C_1\cap L$ and $R_1:=C_1\cap R$. The first part of Lemma \ref{Lemma B p q any} states that $\bx$ is a solution to system \eqref{Label Bv 1}--\eqref{Label Bv 2} if and only if there exists $\la\in[0,1]$ such that $x_l=\la$ for all $l\in L_1$ and $x_r=1-\la$ for all $r\in R_1$. Since $\abs{L_1}=\abs{R_1}$, we can define a bijection $\rho\colon  L_1\to R_1$. Now observe that
$$
x_l + x_{\rho(l)} = \la + (1-\la) = 1
$$
for every $l\in L_1$. Hence, \eqref{Label Bv gloves 1}--\eqref{Label Bv gloves 2} holds true for $L_1$.

If $p_1>q_1$, then we deduce from the second part of Lemma \ref{Lemma B p q any} that there are two possibilities: either there exists $i>0$ such that $p_1>q_1,\dots, p_{i-1}>q_{i-1}$ with $p_i=q_i$ or $p_1>q_1,\dots, p_k>q_k$. We will consider only the first possibility and return to the second one at the end of the proof. Let $L_1:=C_i\cap L$ and $R_1:=C_i\cap R$. Due to the third part of Lemma \ref{Lemma B p q any} this implies that $x_l=0$ for all $l\in L_1$ and $x_r=1$ for all $r\in R_1$. But since $p_i-p_0=q_i-q_0$, there is a bijection $\rho$ between $L_1$ and $R_1$ and, similarly as in the case $p_1=q_1$, we observe that $x_l+x_{\rho(l)}=1$ and $x_l\ge 0$ for all $l\in L_1$.

The case of $p_1<q_1$ can be handled in exactly the same way. Note that it may not happen that $p_1<q_1,\dots, p_k<q_k$ because there are more left gloves than right gloves.

Applying this procedure multiple times, we have managed to find an index~$i$, sets $\hat{L}$ and $\hat{R}$ and a bijection $\rho\colon\hat{L}\to\hat{R}$ such that the following properties are satisfied:
\begin{enumerate}
\item $p_i=q_i$ and $p_{i+1}>q_{i+1}, \dots, p_k>q_k$,
\item $x_l+x_{\rho(l)}=1$ and $x_l\ge 0$ for all $l\in\hat{L}$,
\item $\hat{L}\cup\hat{R}=C_i$ and  $\abs{\hat{L}}=\abs{\hat{R}}=p_i$.
\end{enumerate}
From the third part of Lemma \ref{Lemma B p q any} we obtain then that $x_l=0$ for all $l\in L\setminus\hat{L}$ and $x_r=1$ for all $r\in R\setminus\hat{R}$. Find any $L'\subseteq L\setminus\hat{L}$ such that $\abs{L'}=\abs{R\setminus\hat{R}}$, define $\tilde{L}:=\hat{L}\cup L'$ and extend bijection $\rho\colon \hat{L}\to\hat{R}$ to a bijection $\rho\colon\tilde{L}\to R$. Then any such $\tilde{L}$ and $\rho$ satisfy \eqref{Label Bv gloves 1}--\eqref{Label Bv gloves 3}, which completes the proof.\qed
\end{proof}

\section{Conclusions} \label{sec:concl}
We have inserted a new non-convex solution concept, the intermediate set, in-between the core and the Weber set of a coalitional game. Our main tool in this paper is Theorem \ref{Theorem Bv}, which transforms the analytical task of computing the limiting superdifferential into the equivalent problem of solution of finitely many systems of linear inequalities. The achieved characterization by coalitional chains makes it possible to interpret the payoff vectors in the intermediate set as marginal coalitional contributions satisfying the conditions \eqref{Label Bv 1}--\eqref{Label Bv 2} or, equivalently, as core allocations with respect to a family of marginal games (Theorem \ref{Thm IS Cores}).

We will outline some ideas for the future research on this topic:
\begin{enumerate}
	\item The family of all coalitional chains in the player set $N$  is in one-to-one correspondence with the set of all nonempty faces of the permutohedron of order $n$; see \cite{Ziegler95}. An idea is to look at the relation between the algebraic structure of the corresponding face lattice and the geometric composition of the convex components $\calM_\calH(v)$ of the intermediate set $\Mcalv$. Specifically, what are the properties of the mapping sending a chain $\calH$ to the convex polytope  $\calM_\calH(v)$ for a fixed game $v$?
	\item The following question was already mentioned in Remark \ref{rem:question} using the language of variational analysis: For which games $v$ is the core of $v$ an intersection of selected components of the intermediate set? Example \ref{Example gloves simple}, Example \ref{Example star} and Theorem~\ref{Theorem simple Bv} suggest that non-trivial examples of such coalitional games $v$ are  not difficult to find.
	\item Many solution concepts (the core, the Shapley value etc.) can be axiomatized on various classes of games. Is there an axiomatization of the intermediate set on some class of coalitional games?
	\item The coincidence of the core with the Weber set is essential for the characterization of extreme rays of the cone of supermodular games presented in~\cite{StudenyKroupa:CoreExtreme}.  There can be a large gap between the core and the Weber set outside the family of supermodular games. Our plan is to study the geometrical properties of the intermediate set on larger classes of games including the supermodular games, such as the cone of exact games or the cone of superadditive games.
\end{enumerate}

\appendix
\section*{Appendix}
\section{Superdifferentials}\label{App:Super}

In this section we will define the selected concepts of variational (nonsmooth) analysis, mainly various superdifferentials which generalize the superdifferential of concave functions. Since these superdifferentials will be computed only for the Lov\'asz extension, we will confine to defining superdifferentials only for piecewise affine functions. Even though the computation of these objects may be rather a challenging task, see e.g. \cite{adam.cervinka.pistek.2014,henrion.outrata.2008}, the presented framework allows for a significant simplification. For the general approach based on upper semicontinuous functions, we refer the reader to \cite{rockafellar.wets.1998}.

The standard monographs on variational analysis \cite{mordukhovich.2006,rockafellar.1970,rockafellar.wets.1998} follow the approach usual in convex analysis by dealing with subdifferentials instead of superdifferentials. However, most of the results can be easily transformed to the setting of superdifferentials, usually by reversing inequalities only.

\begin{definition}\label{Definition superdifferentials}
Let $f\colon \R^n\to\R$ be a piecewise affine function and $\bxb\in\R^n $. We say that $\bx^*\in\R^n$ is~a
\begin{itemize}
\item \emph{regular (Fr\'echet) supergradient} of $f$ at $\bxb$
%, denoted by $\bx^*\in\sdf f(\bxb)$,
if there exists neighborhood $\Xcal$ of $\bxb$ such that for all $\bx\in\Xcal$ we have
$$
f(\bx) - f(\bxb) \le \ssoucin{\bx^*}{\bx-\bxb};
$$
\item \emph{limiting (Mordukhovich) supergradient} of $f$ at $\bxb$
%, denoted by $\bx^*\in\sdl f(\bxb)$,
if for every neighborhood $\Xcal$ of $\bxb$ there exists $\bx\in\Xcal$ such that $\bx^*$ is a Fr\'echet supergradient of $f$ at $\bx$;
\item \emph{convexified (Clarke) supergradient} of $f$ at $\bxb$
%, denoted by $\bx^*\in\sdc f(\bxb)$,
if 
$$
\bx^*\in\conv\{\by\in\dR^n|\ \forall\,\text{neighborhood }\Xcal\text{ of }\bxb\enskip \exists\,\bx\in\Xcal\cap D\text{ with }\by = \nabla f(\bx)\},
$$
where
$$
D:= \{\bx\in\R^n|\ f\text{ is differentiable at }\bx\}.
$$
\end{itemize}
The collection of all (regular, limiting, convexified) supergradients of $f$ at $\bxb$ is called \emph{(Fr\'echet, limiting, Clarke) superdifferential} and it is denoted by $\sdf f(\bxb)$, $\sdl f(\bxb)$ and $\sdc f(\bxb)$, respectively.
\end{definition}

\begin{remark}
The previous definition can be found e.g. in \cite[Definition 8.3]{rockafellar.wets.1998}. Note that in the original definition term $o(\norm{\bx-\bxb})$ is added. Because we work with piecewise affine functions, this term is superfluous. If $f$ is concave, then all the above superdifferentials  coincide with the standard superdifferential for concave functions.
\exampleEnd\end{remark}

It is possible to show that
$$
\sdf f(\bxb) \subseteq \sdl f(\bxb) \subseteq \sdc f(\bxb), \quad \bxb\in \R^n,
$$
where all the inequalities may be strict. According to \cite[Theorem 8.49]{rockafellar.wets.1998} we have the following relation between the limiting and the Clarke superdifferential for every piecewise affine function~$f$:
$$
\sdc f(\bxb) = \conv \sdl f(\bxb).
$$
\noindent
We will show the  differences among the three discussed superdifferentials.

\begin{example}
Let $f\colon \R\to\R$ be defined by
$$
f(x) = \begin{cases} x &\text{if $x\in(-\infty,0]$,}\\ 0 &\text{if $x\in[0,1]$,}\\ x-1 &\text{if $x\in[1,\infty)$.} \end{cases}
$$
This function is depicted in Figure \ref{Figure f superdifferentials}.
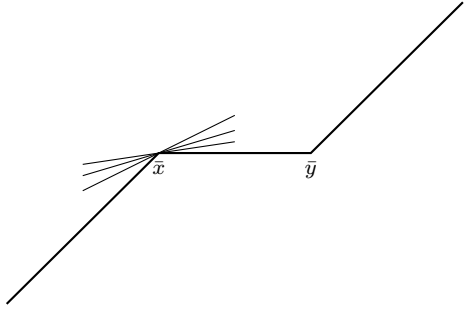
\begin{figure}[!ht]
\centering
\begin{tikzpicture}
\draw [thick] (-2,-2) -- (0,0) node[below] {$\xb$}  -- (2,0) node[below] {$\yb$} -- (4,2);
\draw (-1,-0.5) -- (1,0.5);
\draw (-1,-0.3) -- (1,0.3);
\draw (-1,-0.15) -- (1,0.15);
\end{tikzpicture}
\caption{Supergradients for a piecewise affine function $f$}
\label{Figure f superdifferentials}
\end{figure}
Consider points $\xb=0$ and $\yb=1$. The locally supporting hyperplanes from the definition of Fr\'echet superdifferential at $\xb$ are depicted in the figure. Note that there are no affine majorants for $f$ at $\yb$, which means that the Fr\'echet superdifferential is empty at this point. Thus we obtain
\begin{align*}
&\sdf f(\xb) = [0,1], & \sdf f(\yb) &= \emptyset, \\
&\sdl f(\xb) = [0,1], & \sdl f(\yb) &= \{0,1\}, \\
&\sdc f(\xb) = [0,1], & \sdc f(\yb) &= [0,1]. 
\end{align*} 
\exampleEnd\end{example}

\begin{comment}
Due to Lemma \ref{Lemma sets zero one}, it is sufficient to be able to compute superdifferentials of positively homogeneous functions at $\zero$. For such situation, another simplification is possible, the local character of a superdifferential changes into a global one.

\begin{lemma}
Assume that $f:\R^n\to\R$ is positively homogeneous and piecewise linear function. Then
\begin{align*}
\sdf f(0) &= \{\bx^*|\ f(\bx) \le \ssoucin{\bx^*}{\bx}\text{ for all }\bx\in\R^n\},\\
\sdl f(0) &= \bigcup_{\bx\in\R^n}\sdf f(\bx).
\end{align*}
\end{lemma}
\begin{proof}
The first part is obvious. The second one follows from the fact that $\sdf f(\bx) = \sdf f(c\bx)$ for all $c>0$.\qed
\end{proof}
\end{comment}
The superdifferential sum rule is employed frequently in this paper. The following proposition collects the results of \cite[Exercise 8.8, Corollary 10.9, Exercise 10.10]{rockafellar.wets.1998}.

\begin{proposition}\label{Proposition sum rule}
Let $f_1,f_2\colon\R^n\to\R$ be piecewise affine functions. Then 
$$
\partial(f_1+f_2)(\bx)\subseteq \partial f_1(\bx)+\partial f_2(\bx), \quad \bx \in \dR^n. 
$$
Moreover, if at least one of the functions is smooth around $\bx$, we obtain equality in the previous relation.
\end{proposition}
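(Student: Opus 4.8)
The plan is to exploit the piecewise affine structure throughout, so I would first fix a single \emph{common} polyhedral subdivision $\calP$ of $\R^n$ that simultaneously refines the (finitely many) linearity regions of $f_1$ and of $f_2$; such a refinement exists because the linearity regions are polyhedra and their common overlay is again a finite polyhedral subdivision. On every full-dimensional cell $P\in\calP$ all three functions $f_1$, $f_2$, $f_1+f_2$ are affine, and
$$
\nabla(f_1+f_2)|_P=\nabla f_1|_P+\nabla f_2|_P.
$$
The purpose of $\calP$ is that it is a valid (if not minimal) linearity subdivision for all three functions at once.

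Next I would record the description of the limiting superdifferential of a piecewise affine function $g$ at a point $\bxb$ in terms of $\calP$, namely
$$
\sdl g(\bxb)=\{\,\nabla g|_P\mid P\in\calP \text{ full-dimensional},\ \bxb\in\cl P\,\}.
$$
This follows from Definition \ref{Definition superdifferentials}: at any point $\bx$ in the interior of a cell $P$ the function $g$ is affine near $\bx$, so $\sdf g(\bx)=\{\nabla g|_P\}$, and the limiting superdifferential is the outer limit $\Limsup_{\bx\to\bxb}\sdf g(\bx)$. Since a piecewise affine $g$ has only finitely many gradient values, each realized on a full-dimensional cell, the outer limit collects exactly those gradients whose cell accumulates at $\bxb$, i.e.\ those $P$ with $\bxb\in\cl P$.

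Granting this description, the inclusion for the limiting superdifferential is immediate. Let $\bx^*\in\sdl(f_1+f_2)(\bxb)$ and compute this superdifferential using $\calP$: there is a full-dimensional $P\in\calP$ with $\bxb\in\cl P$ and $\bx^*=\nabla(f_1+f_2)|_P=\nabla f_1|_P+\nabla f_2|_P$. Because the very same cell $P$ touches $\bxb$, the description applied to $f_1$ and to $f_2$ gives $\nabla f_1|_P\in\sdl f_1(\bxb)$ and $\nabla f_2|_P\in\sdl f_2(\bxb)$, so $\bx^*\in\sdl f_1(\bxb)+\sdl f_2(\bxb)$. The Clarke case then follows by convexification: using $\sdc=\conv\sdl$ together with $\conv(A+B)=\conv A+\conv B$, the inclusion for $\sdc$ is inherited from the one just proved.

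For the equality assertion, suppose $f_2$ is smooth around $\bxb$. A piecewise affine function that is smooth on a neighborhood is affine there, so $\sdl f_2(\bxb)=\{\nabla f_2(\bxb)\}$ is a singleton and adjoining $f_2$ leaves the local subdivision of $f_1$ unchanged. Hence every full-dimensional cell $P$ with $\bxb\in\cl P$ satisfies $\nabla(f_1+f_2)|_P=\nabla f_1|_P+\nabla f_2(\bxb)$, and reading the description of $\sdl$ in both directions gives $\sdl(f_1+f_2)(\bxb)=\sdl f_1(\bxb)+\{\nabla f_2(\bxb)\}$; the same bookkeeping applies verbatim to $\sdf$ and $\sdc$, which is the only form in which the Fr\'echet (core) instance is invoked in the paper. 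I expect the one delicate point to be the second step---justifying the $\calP$-description of $\sdl$ and, in particular, verifying that every cell of $f_1+f_2$ touching $\bxb$ is realized as a common cell of $\calP$ (this is what lets the gradient split as a genuine sum of elements of $\sdl f_1$ and $\sdl f_2$). Once the common refinement is fixed this is automatic, since passing to a finer subdivision never alters the gradient on a cell.
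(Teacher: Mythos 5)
Your argument hinges on the description
$\sdl g(\bxb)=\{\nabla g|_P \mid P\in\calP \text{ full-dimensional},\ \bxb\in\cl P\}$,
and this is the step that fails. That set is the collection of limiting \emph{gradients} (the ``B-superdifferential''), which is in general a proper subset of the limiting superdifferential: since $\bxb$ itself is an admissible point in Definition \ref{Definition superdifferentials}, one always has $\sdf g(\bxb)\subseteq\sdl g(\bxb)$, and at a concave kink the Fr\'echet superdifferential is a full polytope rather than a finite set of cell gradients. The paper's own example in Appendix \ref{App:Super} is a counterexample to your formula: at $\xb=0$ one has $\sdl f(\xb)=[0,1]$, whereas the gradients of the two adjacent cells give only $\{0,1\}$. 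Consequently your proof of the inclusion only accounts for those $\bx^*\in\sdl(f_1+f_2)(\bxb)$ that happen to be cell gradients; e.g.\ for $f_1(x)=f_2(x)=-|x|$ at $0$ your argument establishes $\{-2,2\}\subseteq\{-1,1\}+\{-1,1\}$ but says nothing about the point $0\in\sdl(f_1+f_2)(0)=[-2,2]$. The obvious repair---write $\sdl(f_1+f_2)(\bxb)$ as the union of $\sdf(f_1+f_2)(\by)$ over nearby $\by$ and apply a pointwise Fr\'echet sum rule---also fails, because the Fr\'echet superdifferential does not satisfy the sum-rule inclusion (take $f_1=|x|$, $f_2=-|x|$ at $0$: the left-hand side is $\{0\}$ while $\sdf f_1(0)=\emptyset$). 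This is precisely why the statement genuinely requires the limiting subdifferential calculus of Rockafellar--Wets (Corollary 10.9), which the paper cites instead of proving.

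Two parts of your proposal do survive. The common refinement $\calP$ and the cell-gradient description are correct for the \emph{Clarke} superdifferential (that is how the paper defines it), so your argument does prove $\sdc(f_1+f_2)(\bx)\subseteq\sdc f_1(\bx)+\sdc f_2(\bx)$ directly, without detouring through $\sdl$. And the equality assertion when one summand is smooth (hence locally affine) is easily obtained straight from the definitions---adding an affine function translates every Fr\'echet superdifferential by its gradient, hence also the limiting and Clarke ones---so that part needs no appeal to your flawed description. For comparison, the paper offers no proof at all here: Proposition \ref{Proposition sum rule} is stated as a compilation of \cite[Exercise 8.8, Corollary 10.9, Exercise 10.10]{rockafellar.wets.1998}.
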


\section{Proof of Theorem \ref{Theorem Bv}}\label{Appendix proof}

To prove Theorem \ref{Theorem Bv}, consider first a game $v\in\Gamma(N)$, fix $\bxb\in\R^n$ and choose any $\pi\in\Pi(\bxb)$. Then there are necessarily unique integers
$$
0=L_0<L_1<\dots<L_k=n
$$
such that $L_i-L_{i-1}$ is the number of coordinates of $\bxb$ which have the $i$--th greatest distinct value in the order given by $\pi$:
$$
\bxb_{\pi(1)} = \dots = \bxb_{\pi(L_1)}>\bxb_{\pi(L_1+1)} = \dots = \bxb_{\pi(L_2)} > \dots > \bxb_{\pi(L_{k-1}+1)} = \dots = \bxb_{\pi(L_k)}.
$$
Define
$$
C_i:=\{\pi(1),\dots,\pi(L_i)\}
$$
and observe that $C_i$ is independent of the choice of $\pi\in\Pi(\bxb)$. Take any $\bx$ sufficiently close to $\bxb$ and select some $\rho\in\Pi(\bx)$. Then $\rho\in\Pi(\bxb)$ and 
$$
\aligned
V_j^\rho(\bx)&\subseteq V_j^\rho(\bxb),\ &&j=1,\dots,n,\\
V_{L_i}^\rho(\bx)&= V_{L_i}^\rho(\bxb)=C_i,\ &&i=1,\dots,k.
\endaligned
$$
This allows us to write $\Gl$ in a separable structure
\begin{equation}\label{Label Lovasz decomposition}
\Gl(\bx) = \sum_{i=1}^k\Gl_i(\bx_{C_i\setminus C_{i-1}}),
\end{equation}
where $\bx_{A}$ is the restriction of $\bx$ to components $A$ and $\Gl_i:\R^{\abs{B_i}}\to\R$ is defined as
$$
\Gl_i(\by) = \sum_{j=1}^{\abs{B_k}}y_{\phi(j)}\left[v(C_{i-1}\cup V_j^\phi(\by)) - v(C_{i-1}\cup V_{j-1}^\phi(\by))\right],
$$
where $\phi\in\Pi(\by)$.  We now fix a constant $c>0$, coalition $B\subseteq C_i\setminus C_{i-1}$ and denoting $a$ to be the common value of $\bxb$ on $C_i\setminus C_{i-1}$, we obtain
$$
\aligned
\Gl_i((\bxb+c\chi_B)_{C_i\setminus C_{i-1}}) &= a\left[(v(C_i) - v(C_{i-1}\cup B)\right] + (a+c)\left[(v(C_{i-1}\cup B) - v(C_{i-1})\right],\\
\Gl_i(\bxb_{C_i\setminus C_{i-1}}) &= a\left[(v(C_i) - v(C_{i-1})\right],
\endaligned
$$
so that
\begin{subequations}\label{Label app}
\begin{equation}\label{Label app 1}
\Gl_i(\bxb_{C_i\setminus C_{i-1}}+c\chi_B) - \Gl_i(\bxb_{C_i\setminus C_{i-1}}) = c\left[(v(C_{i-1}\cup B) - v(C_{i-1})\right].
\end{equation}
When we choose $B=N$, we can move in the opposite direction as well, obtaining
\begin{equation}\label{Label app 2}
\Gl_i(\bxb_{C_i\setminus C_{i-1}}-c\chi_N) - \Gl_i(\bxb_{C_i\setminus C_{i-1}}) = c\left[(v(C_i) - v(C_{i-1})\right].
\end{equation}
\end{subequations}
Now we prove the following lemma.

\begin{lemma}\label{Lemma Bv}
For any $i\in\{1,\dots,k\}$ we have
\small{
$$
\sdf\Gl_i(\bxb_{C_i\setminus C_{i-1}}) = \left\{\bx^*\left|\ \aligned  \bx^*(C_i\setminus C_{i-1}) &= v(C_i) - v(C_{i-1}),\\ \bx^*(B) &\ge v(C_{i-1}\cup B) - v(C_{i-1})\text{ for all $B\subseteq C_i\setminus C_{i-1}$}\endaligned \right.\right\}.
$$}
\end{lemma}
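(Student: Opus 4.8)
The plan is to recognize $\Gl_i$ as the Lov\'asz extension of the ``marginal'' game $w$ on the block $B_i := C_i\setminus C_{i-1}$ defined by $w(B) = v(C_{i-1}\cup B) - v(C_{i-1})$, and to show that $\sdf\Gl_i(\bxb_{B_i})$ is precisely the core of $w$, which is the set on the right-hand side. Since $\bxb$ is constant (equal to some value $a$) on the level set $B_i$, we have $\bxb_{B_i} = a\chi_{B_i}$, and the shift identity for Lov\'asz extensions — the block-level analogue of Lemma \ref{Lemma subdifferential zero one}, valid for every real scalar — gives $\Gl_i(\bxb_{B_i}+\by) = \Gl_i(\by) + a\,(v(C_i)-v(C_{i-1}))$ together with $\Gl_i(\bxb_{B_i}) = a\,(v(C_i)-v(C_{i-1}))$. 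Hence $\Gl_i(\bxb_{B_i}+\by) - \Gl_i(\bxb_{B_i}) = \Gl_i(\by)$, so the defining inequality of a Fr\'echet supergradient reads $\Gl_i(\by)\le\langle\bx^*,\by\rangle$ for all small $\by$; positive homogeneity of $\Gl_i$ then upgrades this to the global inequality $\Gl_i(\by)\le\langle\bx^*,\by\rangle$ for every $\by\in\R^{|B_i|}$.

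For the inclusion $\subseteq$, I would read the two defining conditions directly off the local computations already established. Testing the supergradient inequality along $\bx = \bxb_{B_i}+c\chi_B$ with small $c>0$ and arbitrary $B\subseteq B_i$, and invoking \eqref{Label app 1}, yields $\bx^*(B)\ge v(C_{i-1}\cup B)-v(C_{i-1})$, producing all the inequality constraints simultaneously. Testing along the whole block in both directions $\pm c\chi_{B_i}$ — legitimate because $\bxb_{B_i}$ is a multiple of the all-ones vector, so a uniform shift up or down preserves the local affine form of $\Gl_i$, cf.\ \eqref{Label app 2} — pins down the equality $\bx^*(B_i) = v(C_i)-v(C_{i-1})$.

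For the reverse inclusion $\supseteq$, I would verify the global inequality $\Gl_i(\by)\le\langle\bx^*,\by\rangle$ by the standard Lov\'asz-decomposition argument. Fixing $\by$ and a compatible $\phi\in\Pi(\by)$, I expand both $\langle\bx^*,\by\rangle$ and $\Gl_i(\by)$ using the decomposition \eqref{eq:decomposition}; subtracting, the nonnegative increments $y_{\phi(j)}-y_{\phi(j+1)}$ multiply the nonnegative gaps $\bx^*(V_j^\phi(\by)) - w(V_j^\phi(\by))\ge 0$ coming from the inequality constraints, while the final coefficient multiplies $\bx^*(B_i)-w(B_i)=0$ coming from the equality constraint. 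This shows $\langle\bx^*,\by\rangle - \Gl_i(\by)\ge 0$, so $\bx^*$ is a Fr\'echet supergradient of $\Gl_i$ at $\bxb_{B_i}$.

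The main obstacle, and the step deserving the most care, is the reduction in the first paragraph: one must justify that the purely local Fr\'echet condition at the non-generic point $a\chi_{B_i}$ — where all sorting orders of nearby points are admissible — collapses to a single global linear inequality. This rests on the shift identity holding for both signs of the scalar and on positive homogeneity, and it is exactly the point where one uses that $B_i$ is a full level set of $\bxb$ rather than an arbitrary coalition. Once this reduction is in place, the lemma is essentially Proposition \ref{Proposition Cv Wv superdifferential} applied to the marginal game $w$, and the two displayed conditions are merely the efficiency and coalitional-rationality constraints defining its core.
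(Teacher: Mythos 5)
Your proof is correct, and it reaches the lemma by a partly different route than the paper. The forward inclusion is essentially identical: both you and the paper test the supergradient inequality along $\bxb_{C_i\setminus C_{i-1}}+c\chi_B$ for small $c>0$ and along the whole block in both directions, reading the constraints off \eqref{Label app 1}--\eqref{Label app 2}. The difference lies in the setup and in the reverse inclusion. The paper stays local: it shows that every $\by$ near $\bxb_{C_i\setminus C_{i-1}}$ lies in the convex hull of the finitely many points $\by^0,\dots,\by^{|C_i\setminus C_{i-1}|}$ at which the supergradient inequality is already known from \eqref{Label app}, and concludes by piecewise linearity of $\Gl_i$ on the relevant cells. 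You instead identify $\Gl_i$ as the Lov\'asz extension of the marginal game $v_i^\calH$ of Definition \ref{def marg game}, use the shift identity (valid for any real scalar, by the same computation as in Lemma \ref{Lemma subdifferential zero one}) together with positive homogeneity to convert the local Fr\'echet condition at $a\chi_{C_i\setminus C_{i-1}}$ into the global inequality $\Gl_i(\by)\le\ssoucin{\bx^*}{\by}$, and then verify that inequality by the telescoping expansion along \eqref{eq:decomposition}. Both arguments are sound; yours amounts to the Danilov--Koshevoy identification of $\sdf\widehat{v_i^\calH}(0)$ with $\Ccal(v_i^\calH)$ applied to the marginal game, which makes the link to Lemma \ref{lemMarg} and Theorem \ref{Thm IS Cores} transparent, whereas the paper's convex-hull argument works directly at the base point without invoking homogeneity. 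The step you rightly single out as delicate --- that $C_i\setminus C_{i-1}$ is a full level set of $\bxb$, so a uniform shift in either direction preserves the ordering pattern --- is precisely what legitimizes the shift identity and the two-sided test, and is the same fact the paper exploits in deriving \eqref{Label app 2}.
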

\begin{proof}
The definition of Fr\'echet superdifferential and the piecewise affinity of $\Gl_i$ give
$$
\sdf\Gl_i(\bxb_{C_i\setminus C_{i-1}}) = \{\bx^*|\ \Gl_i(\by) - \Gl_i(\bxb_{C_i\setminus C_{i-1}})\le \ssoucin{\bx^*}{\by -\bxb_{C_i\setminus C_{i-1}}}\text{ for all }\by\text{ close to }\bxb_{C_i\setminus C_{i-1}}\}.
$$
Consider now any $\bx^*\in\sdf\Gl_i(\bxb_{C_i\setminus C_{i-1}})$,  any $B\subseteq C_i\setminus C_{i-1}$, and put $\by = \bxb_{C_i\setminus C_{i-1}}+c \chi_B$, where $c>0$ is sufficiently small. By realizing that $\ssoucin{\bx^*}{\by -\bxb_{C_i\setminus C_{i-1}}} = c\bx^*(B)$ and from relation \eqref{Label app 1} it follows that
$$
\bx^*(B) \ge v(C_{i-1}\cup B) - v(C_{i-1}).
$$
Similarly from \eqref{Label app 2} we obtain equality in the previous relation for $B=N$. This finishes the proof of the first inclusion.

Consider now any $\bx^*$ from the right--hand side of the formula in Lemma \ref{Lemma Bv} and fix any~$\by$ from a sufficiently small neighborhood of $\bxb_{C\setminus C_{i-1}}$. Defining
$$
\aligned
\by^0 &:= \bxb_{C_i\setminus C_{i-1}}-\chi_{\{1,\dots,\nrm{C_i\setminus C_{i-1}}\}},\\
\by^j &:= \bxb_{C_i\setminus C_{i-1}}+\chi_{\{\phi(1)\dots\phi(j)\}},\quad j=1,\dots,\nrm{C_i\setminus C_{i-1}},
\endaligned
$$
we have
$$
\by\in \conv\left\{\by^0, \by^1, \dots, \by^{\nrm{C_i\setminus C_{i-1}}}\right\}.
$$
From the assumption and from \eqref{Label app} we obtain that 
\begin{equation}\label{Label defin Lovasz restricted}
\Gl_i(\by^j) - \Gl_i(\bxb_{C_i\setminus C_{i-1}})\le \ssoucin{\bx^*}{\by^j -\bxb_{C_i\setminus C_{i-1}}}
\end{equation}
for all $j=0,\dots,\nrm{C_i\setminus C_{i-1}}$. Since $\Gl_i$ is linear on very particular domains and since $\by$ lies in the convex hull of the above points, we obtain that formula \eqref{Label defin Lovasz restricted} holds also for $\by$. This finishes the proof.\qed
\end{proof}
\noindent
The decomposition \eqref{Label Lovasz decomposition} together with Lemma \ref{Lemma Bv} imply  that Theorem \ref{Theorem Bv} holds true.

\end{document}